\newtheorem{theorem}{Theorem}[section]
\newtheorem{lemma}[theorem]{Lemma}
\newtheorem{proposition}[theorem]{Proposition}
\newtheorem{corollary}[theorem]{Corollary}
\theoremstyle{definition}
\theoremstyle{remark}
\newtheorem{remark}[theorem]{Remark}
\numberwithin{equation}{section}
\newcommand{\ca}{{\mathcal A}}    \newcommand{\cb}{{\mathcal B}}   
\newcommand{\ce}{{\mathcal E}}    \newcommand{\cf}{{\mathcal F}}   \newcommand{\cg}{{\mathcal G}}
    \newcommand{\ck}{{\mathcal K}}   \newcommand{\cl}{{\mathcal L}}
\newcommand{\cm}{{\mathcal M}}       
  \newcommand{\cs}{{\mathcal S}}   \newcommand{\ct}{{\mathcal T}}
\newcommand{\cu}{{\mathcal U}}       
\newcommand{\cw}{{\mathcal W}}
\newcommand{\R}{\mathbb{R}}   \newcommand{\N}{\mathbb{N}}
\newcommand{\vf}{{\varphi}}
\newcommand{\ex}{\mbox{\sf Exc}({\mathcal U})}
\newcommand{\pot}{\mbox{\sf Pot}({\mathcal U})}
\newcommand{\potb}{\mbox{\sf Pot}({\mathcal U}_\beta)}
\newcommand{\exb}{\mbox{\sf Exc}({\mathcal U}_{\beta})}
\newcommand*{\Fscr}{\mathcal F}
\renewcommand*{\d}{\;\mathrm{d}}
\newcommand{\ds}{\displaystyle}
\begin{document}

\vspace*{-40mm}

\noindent
To appear in {\it Transactions of the American Mathematical Society}\\[40mm]

\title{\mbox{Measure-valued discrete branching Markov processes}}

\author{Lucian Beznea}
\address{Simion Stoilow Institute of Mathematics  of the Romanian Academy, Research unit No. 2,  
P.O. Box \mbox{1-764,} RO-014700 Bucharest, Romania, and University of Bucharest, Faculty of Mathematics and Computer Science}
\email{lucian.beznea@imar.ro}
\thanks{This work was supported by a grant of the Romanian National Authority for Scientific
Research, CNCS --UEFISCDI, project number PN-II-ID-PCE-2011-3-0045. For the second author the research was financed  through the project "Excellence Research Fellowships for Young Researchers", the 2015 Competition, founded by the Research Institute of the University of Bucharest (ICUB) }

\author{Oana Lupa\c scu}
\address{Simion Stoilow Institute of Mathematics of the Romanian Academy,
Research group of the POSDRU Project 82514. 
{\it Current address:}  {Institute of Mathematical Statistics and Applied Mathematics of the Romanian Academy,  Calea 13 Septembrie 13, Bucharest, Romania, and the  Research Institute of the University of Bucharest (ICUB)}}
\email{oana.lupascu@yahoo.com}

\subjclass[2000]{Primary 60J80, 60J45, 60J35; Secondary 60J40, 47D07}




\keywords{Discrete branching process, measure-valued process, branching kernel, branching semigroup, excessive function, standard process,
compact Lyapunov function}

\begin{abstract}
We construct and study  branching Markov processes on the space of finite configurations of the state space of a 
given standard process, controlled by a branching    kernel and a  killing one.
In particular,  we may start with a superprocess, obtaining  a branching  process with state space  the finite configurations of positive finite measures on a  topological space.
A main tool in proving the path regularity of the branching process is the existence  of  convenient
superharmonic functions having compact level sets, allowing the use of appropriate potential theoretical methods.
\end{abstract}

\maketitle

\section{Introduction}\label{1}
The description of a discrete branching process is as follows (cf. e.g.,  \cite{Si68}, page 235,
and \cite{DaGoLi02}). 
An initial particle starts at a point of   a set $E$ and moves according to a standard Markov process with state space $E$  (called base process)
until a random terminal time when it is destroyed and replaced by a finite number of new particles, its direct descendants. 
Each direct descendant moves according to the same right standard process  
until its own terminal time when it too is destroyed and replaced by second generation particles and the process continues in this manner. 
N. Ikeda,  M. Nagasawa, S. Watanabe, and M. L. Silverstein (cf. \cite{INW68}, \cite{INW69}, and \cite{Si68})  
indicated the natural connection between discrete branching processes and nonlinear partial differential operators $\Lambda$ of the type
\[
\Lambda u :=\mathcal{L} u + \sum_{k=1}^{\infty} q_k u^k,
\]
where $\mathcal{L}$ is the infinitesimal generator of  the given base process  
and the coefficients $q_k$  are positive, Borelian functions with $ \sum_{k=1}^{\infty} q_k=1$.
In this case each direct descendant starts at the terminal position of the parent particle and $q_k(x)$
is the probability that a particle destroyed at $x$ has precisely $k$ descendants. 
It is possible to consider a more general nonlinear part for the above operator, generated by a branching kernel $B$; 
the descendants start to move from their birthplaces  which have been distributed according to $B$.
Thus, these processes are also called "non-local branching" (cf. \cite{DaGoLi02});
from the literature about  branching processes we indicate the classical monographs 
\cite{Ha63}, \cite{AtNe72},   \cite{AsHe83},  the recent one \cite{Li11}, and the lecture notes \cite{LeG99} and \cite{DaPe12}.

In this paper we construct  discrete branching Markov processes associated to operators of the type $\Lambda$,
using several  analytic and probabilistic potential theoretical methods.
The base space of the process is the set $\widehat{E}$ of all finite configurations of $E$.

The structure and main results of this paper are the following.
In Section 2 we collect some preliminaries on the resolvents of kernels and basic notions of potential theory. 
We present in $(2.1)$ and Lemma \ref{lem-quasileft} a suitable result on the existence of a c\`adl\`ag, quasi-left continuous strong Markov processes, 
given  a resolvent of kernels,  imposing conditions on the resolvent. The branching kernels on the space of finite configurations
are introduced in Section 3.

Section 4 is devoted to the construction of the measure-valued discrete branching processes.
The first  step is to solve the nonlinear evolution equation induced by $\Lambda$ (see Proposition \ref{prop4.1} and Remark \ref{rem4.2} $(ii)$ below).
Then, using a technique of absolutely monotonic operators developed in \cite{Si68}, it is possible to 
construct (cf. Corollary \ref{cor4.3}) a Markovian transition function on $\widehat{E}$, formed by branching kernels.
We follow the classical approach  from  \cite{INW68} and \cite{Si68}, 
but we consider  a more general  frame,  the  given topological  space $E$ being  a Lusin one  and not  more  compact
(see Ch. 5 in \cite{AsHe83} for the locally compact space situation; a detailed comment is given in Remark \ref{rem4.11} $(i)$).

The second step is to show that  the transition function we constructed on $\widehat{E}$ is indeed associated to a standard process with state space $\widehat{E}$. 
The main result is Theorem \ref{thm4.7},
its proof involves the entrance space $\widehat{E}_1$, an extension of $\widehat{E}$ constructed by using  a Ray type compactification method.
We apply the mentioned results from Section 2, 
showing  that the required imposed conditions from $(2.1)$ are satisfied by the resolvent of kernels on $\widehat{E}$ associated with
the branching semigroup constructed in the previous step.
In Proposition \ref{prop4.7} (ii) we emphasize  relations between a class of excessive functions with respect to the base process $X$ 
and two classes of excessive functions (defined on  $\widehat{E}$)  with respect to the forthcoming branching process:
the linear and the exponential type excessive functions. 
A particular linear excessive function for the branching process becomes a function having compact level sets 
(called compact Lyapunov function) and will lead to the tightness property of the capacity on $\widehat{E}$ (see Proposition \ref{prop4.7} (iii)). 
It turns out that it is necessary to make a perturbation of $\mathcal{L}$ with a kernel induced by the given branching kernel, and we present it in 
Proposition \ref{prop4.4}. 

The above mentioned  tools were useful   in the case of  the continuous branching processes too (cf. \cite{Be11} and \cite{BeLuOp12}),  
e.g.,  for the  superprocesses in the sense of E. B. Dynkin 
(cf. \cite{Dy02}; see Section 5 for the basic  definitions),  
like the super-Brownian motion,
processes on the space of all finite measures on $E$ induced by
operators of the form $\mathcal{L}u-u^\alpha$ with $1<\alpha \leqslant  2$.
We establish in Remark \ref{rem4.4} several  links  with the nonlinear partial differential equations associated with the branching semigroups
and we point out connections between the continuous and discrete branching processes.
Note that  a cumulant  semigroup (similar to the continuous branching case; see $(5.2)$  below) 
is introduced in Corollary \ref{cor4.3} for the discrete branching processes.
In particular,  when the base process $X$ is the Brownian motion, 
then the cumulant semigroup of the induced discrete branching process  formally satisfies a nonlinear evolution equation involving the square of the gradient.
Finally,  recall that the method of finding a convenient compact Lyapunov function was originally  
applied in order to obtain martingale solutions for singular stochastic differential equations on Hilbert spaces  (cf. \cite{BeBoRo06a} and \cite{BeRo11b}) 
and for proving the standardness property of infinite dimensional L\'evy processes on Hilbert spaces (see \cite{BeCoRo11}). 

We complete the main result  with an application as suggested  in \cite{Ha63}, page 50, 
where T. E. Harris emphasizes the interest for branching processes 
for which "each object is a very complicated entity; e.g., an object may itself be a population". 
More precisely, because we  may consider base processes  with general state space, 
it might be a continuous branching process  playing this role. 
In Section 5, Corollary \ref{cor:cont-disc}, we obtain in this way a branching Markov process, 
having the space of finite configurations of positive finite measures on $E$ as base space. 
Note that in \cite{BeLeGLeJ97} new branching processes are generated starting with  a superprocess and using an appropriate  subordination theory.

The proofs of  Lemma \ref{lem-quasileft}, Proposition \ref{prop4.1},  and Proposition \ref{prop4.4} are presented in Appendix, 
which includes as well complements on  excessive measures, Ray cones, and the extension of the base space $E$ up to the entrance space $E_1$, using the energy functional.

The first named author acknowledges enlightening discussions with P. J. Fitzsimmons and J.-F. Le Gall, during the preparation of this paper.

\section{Preliminaries on the resolvents of kernels and standard\\ processes}  

Let $E$ be  a Lusin topological  space (i.e.,  $E$ is homeomorphic to a Borel subset  of a
 compact  metric space)  with Borel $\sigma$-algebra $\cb(E)$.
Let further $\mathcal{U}=(U_\alpha)_{\alpha>0}$ be a sub-Markovian resolvent of kernels on $(E,\cb(E))$. 

We denote by $\mathcal{E}(\mathcal{U})$  the set of all $\cb(E)$-measurable $\mathcal{U}$-\textit{excessive functions}: 
$u\in\mathcal{E}(\mathcal{U})$
  if and only if $u$ is a non-negative, numerical, $\mathcal{B}(E)$-measurable function, $\alpha U_{\alpha} u\leqslant  u
$ for all $\alpha>0$, and  $\lim_{\alpha\rightarrow \infty}\alpha U_\alpha u(x)=u(x)$, $x\in E.$ 

If $\beta>0,$ we denote by $\mathcal{U}_\beta$ the sub-Markovian resolvent of kernels 
$\cu_\beta=(U_{\beta+\alpha})_{\alpha>0}.$  
If $w$ is a $\mathcal{U}_\beta$\textit{-supermedian function} (i.e., $\alpha U_{\beta+\alpha} w\leqslant  w
$ for all $\alpha>0$) then its $\mathcal{U}_\beta$-excessive regularization $\widehat{w}$ is given by 
$\widehat{w}(x)=\sup_{\alpha}\alpha U_{\beta+\alpha}w(x)$,  $x\in E$.
Let $\cs({\cu_\beta})$  denote the set of all $\cb(E)$-measurable $\cu_\beta$-supermedian  functions.

For a familiy $\mathcal{G}$ of real-valued functions on $E$ we denote by $\sigma(\mathcal{G})$ 
(resp. by $\ct(\cg)$)
the $\sigma-$algebra   (resp. the topology) generated by $\mathcal{G}$
and by $b\mathcal{G}$ (resp. $[\cg],$   $\overline{\cg}$) the subfamily of bounded functions
from  $\mathcal{G}$  (resp. the linear space spanned by $\cg$, the closure in the supremum norm of $\cg$).

We denote by $p\cb(E)$ the set of all positive $\cb(E)$-measurable functions on $E$.

\begin{remark} \label{rem2.1}  
The vector space $[b\ce(\cu_\beta)]$ does not depend on $\beta>0$.
\end{remark}

\vspace{-1.6mm}

\noindent
Indeed, if $\alpha>0$, $\alpha<\beta$, then because $\ce(\cu_\alpha)\subset \ce(\cu_\beta)$ it is sufficient to prove that
$b\ce(\cu_\beta)\subset  [b\ce(\cu_\alpha)]$. For,  if $v\in b\ce(\cu_\beta)$ then both $U_\alpha v$ and
$v+(\beta-\alpha)U_\alpha v$ belong to $b\ce(\cu_\alpha)$.

We assume further in this section that  for some $\beta >0$ there exists a strictly positive constant $k$ with
$k \leqslant  U_\beta 1$ (in particular, this happens if the resolvent  $\cu$ is Markovian, i.e., $\alpha U_\alpha 1=1$).

Recall that  a right process $X=(\Omega,\cf,\cf_t, X_t,\theta_t, P^x)$  with state space $E$ is called 
 \textit{standard} if for every finite measure $\mu$ on $(E, \cb(E))$ $X$ has \textit{c\`adl\`ag trajectories} under $P^\mu$,  i.e.,  
 it possesses left limits in $E$ $P^\mu$-a.e. on $[0,\zeta)$ and $X$ is 
 \textit{quasi-left continuous up to $\zeta$} $P^\mu$-a.e., i.e., for every increasing sequence $(T_n)_n$ of stopping times with 
 $T_n\nearrow T$ we have $X_{T_n}\longrightarrow X_T\;P^\mu$-a.e. on $[T<\zeta]$, 
 $\zeta$ being the life time of $X$;
 a \emph{stopping time} is a map $T: \Omega\longrightarrow \overline{\R}_+$
such that the set $[T\leqslant  t]$ belongs to $\Fscr_t$ for all
$t\geqslant  0$.
 \\

The next result is the convenient  one for the construction  of the discrete branching measure-valued processes 
we give in Theorem  \ref{thm4.7} below,  it follows from \cite{BeLuOp12}, Theorem 2.1,  and 
it  is a consequence of \cite{BeRo11b}, Theorem 5.2, Corollary 5.3 $(ii)$, and Theorem 5.5 $(i)$. 
\\

\noindent
$(2.1)\quad$ 
Suppose that  the following three conditions  are satisfied by  
the sub-Markovian resolvent of kernels $\, \mathcal{U}=(U_\alpha)_{\alpha>0}$ on $(E, \cb(E))$:\\

$(h1)\quad$  $\sigma(\mathcal{E}(\mathcal{U}_{\beta}))=\cb(E)$  and all the points  of  $E$  
are non-branch points with respect to  $\mathcal{U}_\beta,$
that is $1\in\mathcal{E}(\mathcal{U}_\beta)$ and if $u,v\in \mathcal{E}(\mathcal{U}_\beta)$
 then for all $x\in E$ we have $\inf(u,v)(x)=\widehat{\inf(u,v)(x)}.$
 
 ${(h2)}\quad$  For every $x\in E$ there exists $v_x\in \mathcal{E}(\mathcal{U}_\beta)$ 
such that $v_x(x)<\infty$ and the set $[v_x\leqslant  n]$ is relatively compact for all $n$; 
such a function $v_x$ is called \textit{compact Lyapunov function}.

$(h3)$ There exists a countable subset $\cf$ of $[b\ce(\cu_\beta)]$ generating the topology of $E$, $1\in \cf$, 
and there exists $u_o\in \ce(\cu_\beta)$, $u_o<\infty$, 
such if $\xi, \eta$ are two finite $\cu_\beta$-excessive measures
with $L_\beta (\xi+\eta, u_o)<\infty$ and such that $L_\beta(\xi, \vf)=L_\beta(\eta, \vf)$ for all $\vf\in \cf$, then $\xi=\eta$; 
here $L_\beta$ denotes  the energy functional associated with $\cu_\beta$, see $(A1)$ in Appendix.

Then there exists a c\`adl\`ag process $X$ with state space $E$ such that $\mathcal{U}$ is the resolvent of $X$, i.e.,
for all $\alpha>0$, $f\in bp\cb(E)$, and $x\in E$ we have $U_\alpha f(x)= E^x\int_0^\infty e^{-\alpha t} f(X_t) dt$.\\

\begin{remark} \label{rem2.2} 
$(i)$ Condition $(h1)$ is  necessary in order to deduce that $\cu$ is the resolvent family of a (Borel) right process;
see  \cite{Sh88}, \cite{St89}, \cite{BeBo04}, and \cite{BeBoRo06b}.

$(ii)$  According to \cite{LyRo92} and \cite{BeBo05}, 
condition $(h2)$ is necessary for proving that the process $X$ has c\`adl\`ag trajectories. 
It is related to the tightness of the associated capacity,  we give some details below.

$(iii)$ The quasi-left  continuity of the forthcoming measure-valued branching process will be deduced from the next lemma.
Some arguments in its proof are classical, e.g., similar to  the Ray resolvent case 
(see for example  Theorem (9.21) from \cite{Sh88} , the proof of Lemma IV.3.21 from
\cite{MaRo92}, and the proof of Theorem 3.7.7 from \cite{BeBo04}).
However, none of the existing results covers our context, therefore for the reader convenience, 
we give the proof of the lemma in Appendix $(A2)$.
\end{remark}

\begin{lemma} \label{lem-quasileft} 
Let $X$ be a  right process with state space $E$ and c\`adl\`ag trajectories.
Let $(p_t)_{t\geqslant  0}$ be its transition function, assume that
there exists a countable subset $\cf$ of $[b\ce(\cu_\beta)]$ generating the topology of $E$
and a family $\ck\subset \overline{[\cf]}$  
which is multiplicative  (i.e., if $f,g\in \ck$ then  $fg\in \ck$) and separating the points of $E$. 
Then the process $X$ is quasi-left continuous, hence standard, provided that
$p_tf$ belongs to $\overline{[\cf]}$
for all $f\in \ck$ and $t>0$. 
If $(p_t)_{t\geqslant  0}$ is Markovian then it is enough to assume that
$\ck$ is a family of bounded,  continuous, real-valued functions on $E$ which is multiplicative, separating the points of $E$, and
$p_tf$ is a continuous function for all $f\in \ck$ and $t>0$. 
\end{lemma}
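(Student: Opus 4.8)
The plan is to prove quasi-left continuity directly and then invoke the definition of a standard process, since càdlàg trajectories are already assumed. Fix a finite measure $\mu$ and an increasing sequence $(T_n)_n$ of stopping times with $T_n\nearrow T$. On the set $\bigcup_n[T_n=T]$ the conclusion $X_{T_n}\to X_T$ is trivial, so I would work on $A:=[T<\zeta]\cap\bigcap_n[T_n<T]$. There the càdlàg property supplies the left limit $X_{T-}\in E$, and since every $f\in\cf$ is continuous (a family generating the topology of $E$ must consist of continuous functions) one gets $f(X_{T_n})\to f(X_{T-})$; the same holds for every $f\in\ck\subset\overline{[\cf]}$. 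Because $\ck$ separates the points of $E$, it therefore suffices to prove
\[ f(X_T)=f(X_{T-})\quad\text{a.s. on }A,\ \text{for every }f\in\ck. \]
Once this holds for a countable point-separating subfamily of $\ck$ (one exists since $\overline{[\cf]}$ is separable and $E$ is second countable, by a Lindel\"of argument on the complement of the diagonal in $E\times E$), a single exceptional null set yields $X_T=X_{T-}$, i.e. $X_{T_n}\to X_T$ on $A$.

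Set $\ch:=\bigvee_n\cf_{T_n}$, and note that $f(X_{T-})$ is $\ch$-measurable. The key identity I would derive is, for fixed $f\in\ck$ and $t>0$,
\[ p_t f(X_{T-})=E^\mu\bigl[f(X_{(T+t)-})\mid\ch\bigr]\quad\text{on }A. \]
To obtain it, apply the strong Markov property at each $T_n$ to write $p_t f(X_{T_n})=E^\mu[f(X_{T_n+t})\mid\cf_{T_n}]$. The left-hand side converges to $p_t f(X_{T-})$ because $p_t f\in\overline{[\cf]}$ is continuous (this is exactly where the hypothesis on $p_t$ is used) and $X_{T_n}\to X_{T-}$. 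For the right-hand side I would invoke Hunt's lemma for the increasing $\sigma$-algebras $\cf_{T_n}\uparrow\ch$: since $T_n+t\nearrow T+t$ with $T_n+t<T+t$, the càdlàg property gives $f(X_{T_n+t})\to f(X_{(T+t)-})$, uniformly bounded by $\lVert f\rVert_\infty$, so the conditional expectations converge to $E^\mu[f(X_{(T+t)-})\mid\ch]$.

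Next I would let $t\downarrow0$ along a sequence $t_k$. On the left, $p_t f\to f$ pointwise and boundedly (right-continuity of the paths together with continuity and boundedness of $f$ gives $p_t f(x)=E^x[f(X_t)]\to f(x)$ for every $x$), so $p_t f(X_{T-})\to f(X_{T-})$. On the right, a càdlàg path satisfies $X_{(T+t)-}\to X_T$ as $t\downarrow0$, whence by conditional dominated convergence $E^\mu[f(X_{(T+t)-})\mid\ch]\to E^\mu[f(X_T)\mid\ch]$. This produces the crucial relation
\[ f(X_{T-})=E^\mu\bigl[f(X_T)\mid\ch\bigr],\qquad f\in\ck. \]
Finally I would exploit multiplicativity: since $f^2=f\cdot f\in\ck$, the same relation applied to $f^2$ gives $f(X_{T-})^2=E^\mu[f(X_T)^2\mid\ch]$, while squaring the displayed identity gives $f(X_{T-})^2=\bigl(E^\mu[f(X_T)\mid\ch]\bigr)^2$. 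Comparing the two shows that the conditional variance of $f(X_T)$ given $\ch$ vanishes, hence $f(X_T)=E^\mu[f(X_T)\mid\ch]=f(X_{T-})$ a.s., which is exactly what was needed.

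I expect the main obstacle to be the passage from $T_n$ to the time-shifted quantities and back: justifying the interchange of the limit in $n$ with the conditional expectation (handled by Hunt's lemma) and coping with the fact that approaching $T+t$ from below produces the \emph{left} limit $X_{(T+t)-}$ rather than $X_{T+t}$ --- it is only after letting $t\downarrow0$ and using right-continuity that the genuine value $X_T$ reappears. The Markovian case is identical and slightly simpler: there $\ck$ is assumed to consist of bounded continuous functions with $p_t f$ continuous, so the continuity of $f$ and of $p_t f$ used above are available directly, without reference to $\overline{[\cf]}$.
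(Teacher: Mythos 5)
Your proposal is correct in substance, but it follows a genuinely different route from the paper's proof in Appendix (A2). The paper compactifies $E$ with respect to $\cf$, uses the supermartingale property of the functions in $b\ce(\cu_\beta)$ to get existence of $Z:=\lim_n X_{T_n}$ in the compactification $K$ on all of $\Omega$, applies the strong Markov property at both $T$ and $T_n$ to the resolvent $U_\alpha g=\int_0^\infty e^{-\alpha t}p_tg\,dt$, lets $\alpha\to\infty$ (using $\alpha U_\alpha g\to g$ for continuous $g$), and finishes with two monotone class arguments on $K\times K$ to identify the joint distribution of $(Z,X_T)$ with that of $(Z,Z)$ on $E\times E$. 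You instead never leave $E$: on $[T<\zeta]$ the c\`adl\`ag hypothesis already provides $X_{T-}$, you apply the strong Markov property only at the $T_n$, pass to the limit in $n$ by Hunt's lemma, let $t\downarrow 0$, and conclude with the conditional-variance trick (where multiplicativity of $\ck$ enters, replacing the paper's monotone-class manipulations) together with a countable point-separating subfamily of $\ck$. Your route is more elementary in that it uses nothing about $\cf$ beyond continuity and countability --- the excessive-function structure, which the paper needs to produce limits in $K$ beyond the lifetime, plays no role --- at the price of only controlling the limit on $[T<\zeta]$, which is all the statement requires.

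Two steps need patches, both routine. First, Hunt's lemma requires a.s.\ convergence of the integrands, and your claim that $f(X_{T_n+t})\to f(X_{(T+t)-})$ can fail on $[T+t=\zeta]$ (and on $[T=\infty,\,\zeta=\infty]$), because left limits in $E$ are guaranteed only on $[0,\zeta)$. The fix: reduce first to bounded $T$ (truncation by $T\wedge k$, as the paper also does), then observe that the events $[T+t=\zeta]$, $t>0$, are pairwise disjoint once $T<\infty$, so all but countably many $t$ satisfy $P^\mu(T+t=\zeta)=0$; choose your sequence $t_k\downarrow 0$ among such $t$. (In the Markovian case $\zeta=\infty$ a.s.\ and the issue disappears, as does the need for truncation.) Second, vanishing of the conditional variance of $W:=f(X_T)1_{[T<\zeta]}$ a.s.\ on $A$ yields $W=E^\mu[W\mid\ch]$ a.s.\ on $A$ only after passing to the $\ch$-measurable set $N:=\bigl[E^\mu\bigl((W-E^\mu[W\mid\ch])^2\mid\ch\bigr)=0\bigr]$, which contains $A$ up to a $P^\mu$-null set, and integrating the conditional variance over $N$; this detour is needed because $[T<\zeta]$, hence $A$, need not belong to $\ch$. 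With these adjustments (and noting that in the Markovian case the countable separating subfamily must come from your Lindel\"of argument rather than from separability, since there $\ck$ is not assumed to lie in $\overline{[\cf]}$), your argument is complete.
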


We present some necessary preliminaries on the tightness  of the capacity induced by a sub-Markovian resovent of kernels.

Assume that  $\mathcal{U}=(U_\alpha)_{\alpha>0}$ is a sub-Markovian resolvent of kernels 
on $(E, \cb(E))$ satisfying condition $(h1)$. 
If $M\in \cb(E)$ and $u\in \mathcal{E}(\mathcal{U}_\beta)$, then the 
 \textit{reduced function } (with respect to $\mathcal{U}_\beta$) \textit{of u on M} is the function $R_\beta^M u$ defined by
\[
R_\beta^M u:=\inf\{v\in\mathcal{E}(\mathcal{U}_\beta): v\geqslant  u\mbox{ on }M\}.
\]
The reduced function $R_\beta^M u$ is a universally $\cb(E)$-measurable 
$\mathcal{U}_\beta$-supermedian function.

Let further $u_o:=U_\beta f_o,$ where $f_o$ is a bounded, strictly positive $\cb(E)$-measurable function,
and fix a finite measure $\lambda$ on $(E, \cb(E))$.
Consider the functional $M\longmapsto c_\lambda(M),M\subset E$, defined as
\[
c_\lambda(M):=\inf\{\lambda(R_\beta^G u_o): G \mbox{ open}, M\subset G\}.
\]
 By \cite{BeBo04}  $c_\lambda$ is a Choquet capacity on $E$.\\

\noindent
$(2.2)\quad$ The following assertions are equivalent  (see Proposition 4.1 in \cite{BeRo11b} and Proposition 2.1.1 in \cite{BeRo11a}):

$(i)$  The capacity $c_\lambda$ is \textit{tight}, i.e., there exists an increasing sequence $(K_n)_n$ of compact sets such that 
$\inf_n c_\lambda(E \setminus K_n)=0.$

$(ii)$ There exist a  $\mathcal{U}_\beta$-excessive function $v$  which is finite
 $\lambda$-a.e.  and a bounded strictly positive $\mathcal{U}_\beta$-excessive function $u$ such that the set
$[\frac{v}{u}\leqslant  \alpha]$ is relatively compact for all $\alpha>0$. 

$(iii)$  There exists a $\mathcal{U}_\beta$-excessive function $v$ which is $\lambda$-integrable and 
such that the set $[\frac{\!\! v}{u_o}\leqslant  \alpha]$ is relatively compact for all $\alpha>0$. 

\begin{remark} \label{rem2.4} 
(i) If there exists a strictly positive constant k such that $k\leqslant  u_o$ 
(in particular, this happens if the resolvent $\mathcal {U}$  is Markovian), 
then in the above assertion $(ii)$ one can take  $u = 1$.

(ii) If $\cu$ is the resolvent of a right process $X$, then the
following fundamental result of G. A. Hunt holds for all $A\in
\cb(E)$, $x\in E$,  and $u\in \ce(\cu_\beta)$:
\[
R_\beta^A u(x)= E^x(e^{-\beta D_A} u (X_{D_A})),
\]
where $D_A$ is the entry time of $A$, $D_A:=\inf \{ t\geqslant  0 :$ $ X_t\in A \}$;
 see e.g. \cite{DeMe75}.
Consequently, the capacity $c_\lambda$ on $E$
is {tight} if and only if  
$P^\lambda (\lim_n D_{E\setminus K_n} < \zeta )=0.$
\end{remark}

\section{Branching kernels on the space of finite configurations} 

The state space for the forthcoming  discrete branching process will be 
the set  $\widehat{E}$ of  finite sums of Dirac measures on $E$, defined as
$$
\widehat{E}:=\left\{\sum_{k\leq k_0}\delta_{x_k}:k_{ 0}\in\N^*, x_k\in E\textrm{ for all } 1\leq k\leq k_0\right\}\cup\{{\bf 0}\},
$$
where ${\bf 0}$ denotes the zero measure. 
The set  $\widehat{E}$   is identified   with the union of all symmetric $m$-th powers
$E^{(m)}$ of $E$
(i.e.,  if $m\geq 1$ then $E^{(m)}$ is the factorization of the Cartesian product $E^m$  by the equivalence relation induced by the
permutation group $\sigma^m$; see, e.g., \cite{BeOp11} for details), 
hence
$$
\widehat{E}= \bigcup_{m \geq 0}E^{(m)},
$$
where $E^{(0)}:=\{\bf 0\}$ (see, e.g., \cite{INW68}).
The set $\widehat{E}$ is called the {\it space of finite configurations of E} 
and it is endowed with the topology of disjoint union of topological spaces 
and the corresponding Borel $\sigma$-algebra $\cb(\widehat{E})$; see \cite{FiKoOl11}.

Let $M(E)$ be the set of all positive finite measures on $E$, 
endowed with the weak topology and let $\cm(E)$ be its Borel $\sigma$-algebra.
For a function $f\in p\cb(E)$ we consider the mappings  $l_f:
M(E)\longrightarrow {\overline{\R}}_+$  and $e_f: M(E)\longrightarrow [0,1]$, defined as
$$
l_f(\mu):= \langle \mu,f \rangle:=\int_E f \mbox{d}\mu, \  \mu\in M(E),
\quad e_f:=\exp\ (-l_f).
$$
Note that the Borel  $\sigma$-algebra $\cm(E)$  of  $M(E)$ is  generated by
$\{l_f \, : \, f \in {bp}{\mathcal B}(E) \}$, 
$\widehat{E}$ becomes a Borel  subset of $M(E)$,  
and the trace of ${\mathcal M}(E)$ on $\widehat{E}$ is $\cb({\widehat{E}})$.

Recall that if $p_1, p_2$  are two finite measures on $\widehat{E}$, then
their convolution $p_1 *p_2$ is the finite measure on $\widehat{E}$
defined for every $F\in bp\cb(\widehat{E})$ by
\[
\int_{\widehat{E}} p_1*p_2(\mbox{d}\nu)F(\nu) := \int_{\widehat{E}} p_1(\mbox{d}\nu_1) \int_{\widehat{E}} p_2(\mbox{d}\nu_2)
F(\nu_1+\nu_2).
\]
In particular, if $f\in p\cb(E)$ then
$p_1  * p_2 (e_f)=p_1 (e_f) p_2 (e_f). $

According with \cite{Si68}, a kernel $N$ on $(\widehat{E},\cb(\widehat{E}))$
 which is sub-Markovian (i.e., $N1\leqslant  1$) is
called \textit{branching kernel} provided that for all $\mu, \nu\in
\widehat{E}$ 
we have
\[
N_{\mu+\nu}= N_\mu * N_\nu, 
\]
where $N_\mu$ denotes  the measure on $\widehat{E}$ 
such that $\int g \, d N_\mu=Ng(\mu)$ for all $g\in bp\cb(\widehat{E})$. 
Note that if $N$ is a nonzero  branching kernel on $\widehat{E}$  then $N_{\bf 0}=\delta_{\bf 0}\in M(\widehat{E})$.

A right (Markov) process with state space $\widehat{E}$ is called \textit{branching process} 
provided that its transition function is formed by branching kernels. For the probabilistic
 interpretation of this analytic branching property see, e.g., \cite{Fi88}, page 337.\\

\noindent
\textbf {Multiplicative functions and branching kernels on $\widehat{E}$}.

For every real-valued, $\cb(E)$-measurable function
$\vf$  define the {\it multiplicative function }  
$\widehat{\varphi}:\widehat{E}\longrightarrow \R_+$ as
\begin{equation}\nonumber
\widehat{\varphi}({\bf x})=
\left\{
\begin{array}{l}
\displaystyle\prod_{k}\varphi(x_k), \textrm{ if }{\bf x}=(x_k)_{k\geq 1}\in \widehat{E},  {\bf x}\not= {\bf 0},  \\[2mm]

1, \;\;\;\;\; \textrm{ if } {\bf x}= {\bf 0},
\end{array}
\right. 
\end{equation}
(cf. \cite{Si68}; see also \cite{BeOp11}).
Observe that each multiplicative function $\widehat{\varphi}$, ${\varphi}
\in p{\mathcal B}(E)$, ${\varphi}\leq 1$, is the restriction to $\widehat{E}$ of an
exponential function on $M(E)$,
$$
\widehat {\varphi}= e_{-\ln {\varphi}}.
$$
In the harmonic analysis on configuration spaces the multiplicative function $\widehat{\varphi}$ is called {\it  coherent state}; see, e.g.,  
\cite{FiKoOl11}.

If $\widehat \vf$ is a
multiplicative function on $\widehat{E}$,  then  $\widehat\vf(\mu +\nu)=\widehat\vf(\mu)
\widehat\vf(\nu)$ for al $\mu, \nu \in \widehat{E}$ and therefore
$p_1  * p_2 (\widehat\vf)=p_1 (\widehat\vf)\  p_2 (\widehat\vf).$

Let $N$ be a sub-Markovian  kernel on $(\widehat{E},\cb(\widehat{E}))$. By Remark 3.1 in \cite{BeOp11}   
the following assertions are equivalent:

$(i)$ $N$ is a branching kernel.

$(ii)$ For all $\vf\in p\cb(E),$   $\vf  \leqslant  1,$
\[
N\widehat{\vf}=\widehat{ (N\widehat{\vf} )|_E  }.
\]

$(iii)$ $N$ maps multiplicative functions into multiplicative
functions.\\

It is possible to construct   branching kernels on $\widehat{E}$, using the above characterization, as follows:

\noindent
$(3.1)\quad$  For every sub-Markovian  kernel $B: p\cb(\widehat{E})\longrightarrow
p\cb(E)$ there exists a branching kernel $\widehat{B}$ on
$(\widehat{E},\cb(\widehat{E}))$ such that for every $\cb(E)$-measurable function
$\vf$, $|\vf|\leqslant  1$, we have
$$
\widehat{B}{\widehat{\vf}}= {\widehat{B{\widehat{\vf}}}}.
$$
The kernel $\widehat{B}$ is defined as:
$$
\widehat{B}_{{\bf x}}:=
\left\{
\begin{array}{l}
B_{ x_1}  * \ldots  *  B_{ x_n} ,\textrm{ if } {\bf x}=\delta_{x_1} + \ldots +\delta_{x_n} ,\; x_1, \ldots,x_n\in E,\\[2mm]

\delta_{\bf 0}\;\;\;\;\;\;\; \;\;\;\;\;\;\;\;\;\;\;\;\,,\textrm{ if } {\bf x}= {\bf 0}.
\end{array}
\right. 
$$
Conversely, if $H$ is a branching kernel on  $(\widehat{E},\cb(\widehat{E}))$
then there exists a unique sub-Markovian kernel $B:
p\cb(\widehat{E})\longrightarrow p\cb(E)$ such that $H=\widehat B$ (see Proposition 3.2 in \cite{BeOp11}).\\

\noindent
\textbf{Example of branching kernel on $\widehat{E}$}.
Let $q_k\in p\cb(E)$, $k\geqslant  1$,
satisfying $\sum_{k \geqslant  1}q_k \leqslant1$. 
Consider the kernel $B: p\cb(\widehat{E})\longrightarrow p\cb(E)$ defined as
\[
B g(x) :=\sum_{k \geqslant  1}q_k(x)g_k(x,\ldots , x),\;   g \in bp\cb(\widehat{E}),\, x\in E,\eqno{(3.2)}
\]
where $g_k:=g|_{E^{(k)}}$ for all $k\geqslant  1$.
By $(3.1)$ there exists a branching kernel $\widehat B$ on $\widehat{E}$ such that  for all
$\vf\in p\cb(E)$, $\vf\leqslant  1$,  we have
\[
\widehat B\widehat \vf|_E= \sum_{k\geqslant  1} q_k \vf^k.
\]

\section{Discrete branching processes} 

Let $X=(\Omega,\Fscr,\Fscr_t, X_t,\theta_t, P^x)$ be a fixed Borel
right process with space $E$ and suppose that its transition function $(T_t)_{t\geqslant  0}$ of $X$ is Markovian.

Let $B:bp\cb(\widehat{E})\longrightarrow bp\cb(E)$
be a sub-Markovian  kernel such that
\[
\sup_{x\in E} Bl_1(x)  < \infty.  \eqno{(4.1)}
\]
 
Note that $(4.1)$ is precisely condition $4.1.2$ from \cite{Si68}, $(2.1)$  in \cite{DaGoLi02}, or
$(4.25)$ from \cite{Li11}
and  if $B$ is given by $(3.2)$ then $(4.1)$ is equivalent with
\[
\sup_{x\in E} \sum_{k\geqslant  1} k q_k(x)< \infty.
\]

We also fix a function $c\in bp\cb(E)$ and we denote by $(T_t^{c})_{t\geqslant  0}$ 
the transition function of the process obtained by killing $X$ with
the multiplicative functional $(e^{-\int_0^t c(X_s)ds})_{t\geqslant  0}$.
It is given by the Feynman-Kac formula:
\[
T_t^{c} f(x)= E^x (e^{-\int_0^t c(X_s)\, ds} f(X_t) ), \quad  f\in bp\cb(E), \, x\in E . 
\]
Note that  if $\mathcal{L}$ is the infinitesimal generator of $X$, then the above 
killed process has the generator $\mathcal{L}-c.$ 

We denote by  $\cu=(U_\alpha)_{\alpha>0}$ the resolvent of the process $X$ and let 
$\cu^c=(U^c_\alpha)_{\alpha>0}$ 
be the resolvent of kernels induced by $(T_t^{c})_{t\geqslant  0}$, i.e.,
the resolvent family of the process killed with $c$. 

Denote by  $\cb_{\mbox{\textsf{u}}}$ the set of all functions $\vf\in p\cb(E)$ such
that $\vf \leqslant  1$.\\


Recall that a map $H: \cb_{\mbox{\textsf{u}}}
\longrightarrow \cb_{\mbox{\textsf {u}}}$ is called \textit{absolutely monotonic}
provided that there exists a sub-Markovian kernel $\mbox{\textbf {H}}:
bp\cb(\widehat{E}) \longrightarrow bp\cb(E)$ such that $H \vf = \mbox{\textbf{
H}}\widehat \vf$ for all $\vf\in \cb_{\mbox{\textsf {u}}}$. 
By $(3.1)$   we
have:\\[1mm]

We also have  (cf. Lemma 2.2 and Theorem 1 from \cite{Si68}): \\[1mm]

\noindent 
$(4.3)\quad$ \textit{ If $H, K$ are absolutely monotonic
then their composition $H K$ is also absolutely monotonic and $
\widehat{\mbox{\textbf{HK}}}= \widehat{\mbox{\textbf{H}}} \widehat{\mbox{\textbf{K}}}. $  
The map $H\longmapsto \widehat{\mbox{\textbf{H}}}$ is a
bijection between the set of all absolutely monotonic mappings and
the set of all branching kernels on $\widehat{E}$. }\\

In the next proposition we solve an appropriate integral equation
(following  the approach of \cite{Si68}, see also \cite{BeOp11}); 
we present its proof  in Appendix $(A3)$.

\begin{proposition}  \label{prop4.1}  
For any $\varphi\in \mathcal{B}_{\mbox{\textsf {u}}}$ the equation
\[
h_t(x)=T_t^{c}\varphi (x)+\int_{0}^t T^c_{t-u}(c
B\widehat{h_u})(x) du,\; t\geqslant  0,\; x\in E, \eqno{(4.4)}
\]
has a unique solution $(t,x)\longmapsto H_t \varphi(x)$ jointly measurable in $(t,x)$, such that
$H_t \varphi \in\mathcal{B}_{\mbox{\textsf {u}}} $ for all $t>0$ and the following assertions hold.

$(i)$ For each $t>0$ the mapping $\varphi\longmapsto H_t \varphi $ is
absolutely monotonic and it is Lipschitz  with the constant $\beta_o t$, where
\[
\beta_o:= \| c\|_\infty \| Bl_1 \|_\infty. 
\]

$(ii)$  The familiy
$(H_t)_{t\geqslant  0}$ is a  semigroup of (nonlinear) operators on
$\mathcal{B}_{\mbox{\textsf {u}}}$. 
If $B1=1$ then $H_t 1=1$ for all $t\geqslant 0$.

$(iii)$  For each  $x\in E$ the function  $t\longmapsto H_t \varphi(x)$ is 
right continuous on $[0,\infty)$,  provided that  $t\longmapsto T_t^{c} \varphi(x)$ is right
continuous.
\end{proposition}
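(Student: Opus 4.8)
The plan is to prove existence and uniqueness of the solution to the integral equation $(4.4)$ by a Picard-type fixed point argument, and then verify the three listed properties $(i)$--$(iii)$ from the structure of the iteration. First I would fix $\varphi\in\cb_{\mbox{\textsf{u}}}$ and, for a fixed finite horizon $[0,\tau]$, define the operator $\Phi$ on the space of jointly measurable functions $(t,x)\mapsto g_t(x)$ with values in $[0,1]$ by
\[
(\Phi g)_t(x):=T_t^{c}\varphi(x)+\int_0^t T^c_{t-u}(cB\widehat{g_u})(x)\,du.
\]
I would first check that $\Phi$ maps this space into itself: since $B$ is sub-Markovian and $c\in bp\cb(E)$, the term $cB\widehat{g_u}$ is a nonnegative bounded measurable function, the time integral preserves joint measurability (via Fubini and the measurability of $(t,x)\mapsto T_t^c f(x)$), and the sub-Markovian property of $(T_t^c)$ together with $\widehat{g_u}\leqslant 1$ keeps the output bounded by $1$; here I use that $T_t^c\varphi + \int_0^t T_{t-u}^c(c\cdot 1)\,du\leqslant T_t^c 1+(1-T_t^c 1)\leqslant 1$ by the Feynman-Kac identity $\frac{d}{dt}T_t^c1=-T_t^c c$.

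Next I would establish the contraction/Lipschitz estimate driving both uniqueness and property $(i)$. The key is that $\widehat{\cdot}$ and $B$ interact well with differences: for $\varphi,\psi\in\cb_{\mbox{\textsf{u}}}$ one has the elementary bound $|\widehat{\varphi}({\bf x})-\widehat{\psi}({\bf x})|\leqslant \sum_k|\varphi(x_k)-\psi(x_k)|$ for ${\bf x}=(x_k)_k$, because a product of factors in $[0,1]$ changes by at most the sum of the changes in each factor (a telescoping argument). Applying $B$ and recalling $Bl_1(x)=\int\big(\sum_k 1\big)\,B_x$, this yields $|B\widehat{\varphi}-B\widehat{\psi}|\leqslant (Bl_1)\,\|\varphi-\psi\|_\infty$, and hence $|cB\widehat{g_u}-cB\widehat{g_u'}|\leqslant \|c\|_\infty\|Bl_1\|_\infty\,\|g_u-g_u'\|_\infty=\beta_o\|g_u-g_u'\|_\infty$. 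Feeding this into the Picard iterate and using $T^c_{t-u}\leqslant 1$ gives $\|(\Phi g)_t-(\Phi g')_t\|_\infty\leqslant \beta_o\int_0^t\|g_u-g_u'\|_\infty\,du$. Iterating this bound produces the standard factor $(\beta_o t)^n/n!$, which gives a genuine contraction on each bounded time interval (after finitely many iterations), hence a unique fixed point $H_t\varphi$; patching over successive intervals yields the solution for all $t\geqslant 0$. The same linear-in-$t$ Lipschitz estimate, applied with $g\equiv H\varphi$, $g'\equiv H\psi$ and Gronwall, gives the Lipschitz constant $\beta_o t$ in $(i)$. Absolute monotonicity of $\varphi\mapsto H_t\varphi$ follows because each Picard iterate is absolutely monotonic — $T_t^c$ is linear hence absolutely monotonic, $B$ composed with $\widehat{\cdot}$ is absolutely monotonic by definition, compositions and the operations in $(4.3)$ preserve absolute monotonicity, and absolute monotonicity is preserved under the uniform (or pointwise) limit of the iterates since the defining kernels converge.

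For the semigroup property in $(ii)$, I would show $H_{t+s}\varphi=H_t(H_s\varphi)$ by verifying that $t\mapsto H_t(H_s\varphi)$ solves $(4.4)$ with initial data $H_s\varphi$ and invoking uniqueness; this is the usual reduction, using the semigroup property of $(T_t^c)$ and a change of variables in the convolution integral to match the two sides. The normalization $H_t1=1$ when $B1=1$ is immediate: plugging $\varphi\equiv 1$ gives $\widehat{1}=1$, $B\widehat{1}=B1=1$, and then $T_t^c1+\int_0^t T_{t-u}^c c\,du=1$ by the Feynman-Kac identity, so the constant function $1$ is the (unique) solution. Finally, right continuity in $(iii)$ follows by writing $H_{t+h}\varphi(x)-H_t\varphi(x)$ as the sum of $T_{t+h}^c\varphi(x)-T_t^c\varphi(x)$ (right continuous by hypothesis) plus the increment of the integral term, and controlling the latter via the Lipschitz bound together with dominated convergence as $h\downarrow 0$. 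I expect the main obstacle to be the telescoping Lipschitz estimate for the multiplicative functions $\widehat{\varphi}$ on the infinitely-many-particle configurations, where the summability needed for $|B\widehat{\varphi}-B\widehat{\psi}|\leqslant (Bl_1)\|\varphi-\psi\|_\infty$ must be justified carefully using exactly the moment bound $(4.1)$; the rest is routine fixed-point theory.
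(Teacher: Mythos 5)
Your proposal has the same skeleton as the paper's proof: the telescoping estimate $|\widehat{\varphi}(\mu)-\widehat{\psi}(\mu)|\leqslant l_1(\mu)\,\|\varphi-\psi\|_\infty$ (the paper's $(A3.2)$; note there is no summability issue to worry about, since every configuration in $\widehat{E}$ is finite --- hypothesis $(4.1)$ enters only to make $\|Bl_1\|_\infty$, hence $\beta_o$, finite), a Picard-type iteration for existence, Gronwall for uniqueness, and uniqueness for the semigroup law. Your argument for $H_t1=1$ is actually cleaner than the paper's: you check directly that the constant function $1$ solves $(4.4)$ when $B1=1$ and invoke uniqueness, whereas the paper runs a second, norm-convergent iteration $(A3.7)$ just for this point. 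However, one step that the paper treats carefully is genuinely gapped in your write-up: the passage to the limit for absolute monotonicity. Your justification --- that absolute monotonicity survives the limit ``since the defining kernels converge'' --- is precisely what must be proved, and it does not follow from the contraction argument. Uniform convergence of $H^n_t\varphi$ controls the kernels $\mbox{\textbf{H}}_t^n$ only against the multiplicative test functions $\widehat{\varphi}$; it does not imply that the subprobability measures $(\mbox{\textbf{H}}_t^n)_x$ converge to a measure on $\widehat{E}$ (mass can drift to configurations with more and more particles), and a contraction argument is indifferent to the starting iterate, for which the kernels need not converge at all. The paper closes this by monotonicity: the iteration started at $T^c_t\varphi$ is increasing, hence the kernels defined recursively in $(A3.6)$ increase, and $\mbox{\textbf{H}}_t:=\sup_n \mbox{\textbf{H}}_t^n$ is a kernel by monotone convergence. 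You need to add exactly this observation (available for your iterates if you start them at $T^c_t\varphi$), or else work with Silverstein's intrinsic finite-difference characterization of absolute monotonicity, which is stable under pointwise limits.

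The second gap is in part $(iii)$. Splitting $H_{t+h}\varphi(x)-H_t\varphi(x)$ at a general time $t$ leaves you with the term $\int_0^t\bigl[T^c_{t+h-u}-T^c_{t-u}\bigr](cB\widehat{H_u\varphi})(x)\,du$, and the dominated convergence you invoke requires right continuity of $s\longmapsto T^c_s f(x)$ for the functions $f=cB\widehat{H_u\varphi}$ --- which the hypothesis does not supply (it concerns only $\varphi$, and for a right process $s\mapsto T^c_sf(x)$ need not be right continuous for a general bounded measurable $f$). The change-of-variables variant instead produces $\beta_o\int_0^t\|H_{u+h}\varphi-H_u\varphi\|_\infty\,du$, a sup-norm Gronwall bound that would need uniform-in-$x$ right continuity, also unavailable. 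The paper's route avoids both problems: by the semigroup law it suffices to prove right continuity at $t=0$, where the integral term tends to $0$ simply because its integrand is bounded and the interval shrinks; for $t_0>0$ one writes $H_{t_0+h}\varphi=H_{t_0}(H_h\varphi)$ and passes to the limit under the kernel $\mbox{\textbf{H}}_{t_0}$ --- which again uses absolute monotonicity. Finally, a shared blemish rather than a defect of yours: both you and the paper extract the Lipschitz constant $\beta_o t$ from Gronwall, but Gronwall yields $e^{\beta_o t}$, and a constant $\beta_o t$ is impossible for small $t$ since $H_0=\mathrm{id}$.
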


\begin{remark} \label{rem4.2}  
$(i)$  If $\vf\in \cb_{\mbox{\textsf {u}}}$ and $t\geqslant  0$ then the sequence $(H_t^n\vf)_{n\geqslant  0}$ defined by $(A3.4)$ in Appendix 
converges uniformly to the solution  $H_t \vf$ of $(4.4)$.
The assertion is a consequence of  the following inequality which may be proved by induction:
\[
\| H_t^{n+1}\vf - H_t^n \vf \|_\infty \leqslant  \frac{(\beta_o t)^n}{n!} \| \vf \|_\infty \quad \mbox{ for all } n \geqslant  0.
\]

$(ii)$ Note that if  $\mathcal{L}$ is the infinitesimal generator of the base process $X$, 
then $(4.4)$ is formally equivalent to
\[
\frac{d}{dt} h_t = (\cl -c) h_t + cB\widehat{h_t }, \; t\geqslant  0.
\]

$(iii)$ If $B$ is given by $(3.2)$ then the condition $B1=1$ is equivalent with
\[
\sum_{k\geqslant  1} q_k(x)=1 \,  \mbox{ for all } x\in E.
\]
\end{remark} 

\begin{corollary} \label{cor4.3} 
For each $t\geqslant  0$ let  $\widehat{\mbox{\textbf{H}}_t}$ be the branching kernel on $\widehat{E}$ associated by $(4.2)$ 
with the absolutely monotonic operator ${H}_t$ from Proposition \ref{prop4.1},
$H_t \vf = \widehat{\mbox{\textbf{H}}_t}\widehat\vf|_E$ for all $\vf \in  \mathcal{B}_{\mbox{\textsf {u}}}$.
Then the following assertions hold.

$(i)$ The family $(\widehat{\mbox{\textbf{H}}_t})_{t\geqslant  0}$ is a sub-Markovian semigroup of branching kernels 
on $(\widehat{E}, \cb(\widehat{E}))$. 

$(ii)$ For each $t\geqslant  0$ and $f\in bp\cb(E)$
define the function $V_t f\in p\cb(E)$ as
\[
V_t f: =-\ln H_t(e^{-f}).
\]
Then the family $(V_t)_{t\geqslant  0}$ is a semigroup of (nonlinear) operators on $bp\cb(E)$ and 
\[
\widehat{\mbox{\textbf{H}}_t}(e_f)=e_{V_t f} \mbox{ for all } f\in bp\cb(E). \eqno{(4.5)}
\]
\end{corollary}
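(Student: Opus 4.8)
The plan is to derive everything from Proposition \ref{prop4.1} together with the correspondence $(4.2)$ between absolutely monotonic operators and branching kernels and the composition rule $(4.3)$; apart from one finiteness check, all the work is already done in the preceding results and the proof is a matter of chaining them through the dictionary $\widehat\psi = e_{-\ln\psi}$ from Section 3.

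For $(i)$, by Proposition \ref{prop4.1}$(i)$ each $H_t$ is absolutely monotonic, so by $(4.2)$ it determines a branching kernel $\widehat{\mbox{\textbf{H}}_t}$ with $H_t\vf = \widehat{\mbox{\textbf{H}}_t}\widehat\vf|_E$ for $\vf\in\cb_{\mbox{\textsf{u}}}$. Being a branching kernel is built into this construction, and taking $\vf=1$ gives $\mbox{\textbf{H}}_t 1 = H_t 1\leqslant 1$, so $\mbox{\textbf{H}}_t$ and hence $\widehat{\mbox{\textbf{H}}_t}$ are sub-Markovian by $(3.1)$. For the semigroup property I would push the identity $H_{t+s}=H_t H_s$ of Proposition \ref{prop4.1}$(ii)$ through $(4.3)$: since $\widehat{\mbox{\textbf{HK}}}=\widehat{\mbox{\textbf{H}}}\,\widehat{\mbox{\textbf{K}}}$ and $H\longmapsto\widehat{\mbox{\textbf{H}}}$ is a bijection, applying it to $H=H_t$, $K=H_s$ yields $\widehat{\mbox{\textbf{H}}_{t+s}}=\widehat{\mbox{\textbf{H}}_t}\,\widehat{\mbox{\textbf{H}}_s}$, while $H_0=\mathrm{id}$ forces $\widehat{\mbox{\textbf{H}}_0}$ to be the identity kernel.

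For $(ii)$, the semigroup property of $(V_t)$ is a direct computation once one records that $H_s(e^{-f})=e^{-V_s f}$ by definition:
\[
V_{t+s} f = -\ln H_t\big(H_s(e^{-f})\big) = -\ln H_t(e^{-V_s f}) = V_t(V_s f),
\]
using $H_{t+s}=H_t H_s$. The identity $(4.5)$ then rests on the dictionary $\widehat\psi=e_{-\ln\psi}$ from Section 3: putting $\vf=e^{-f}\in\cb_{\mbox{\textsf{u}}}$ we have $\widehat\vf=e_f$, and since $\widehat{\mbox{\textbf{H}}_t}$ is a branching kernel it maps multiplicative functions to multiplicative functions, so combining $H_t\vf=\widehat{\mbox{\textbf{H}}_t}\widehat\vf|_E$ with the branching characterization $(ii)$ of Section 3 gives $\widehat{\mbox{\textbf{H}}_t}\widehat\vf=\widehat{H_t\vf}$. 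Hence $\widehat{\mbox{\textbf{H}}_t}(e_f)=\widehat{H_t(e^{-f})}=\widehat{e^{-V_t f}}=e_{V_t f}$, applying the dictionary once more to $\psi=e^{-V_t f}$.

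The only step that is not purely formal is verifying that $V_t f$ really lies in $bp\cb(E)$, i.e. that it is finite, and this is where I expect the actual work. Measurability and positivity of $V_t f=-\ln H_t(e^{-f})$ are immediate from Proposition \ref{prop4.1} and $H_t(e^{-f})\leqslant 1$. For the upper bound I would return to the integral equation $(4.4)$: dropping its nonnegative second term gives $H_t(e^{-f})\geqslant T_t^c(e^{-f})$, and the Feynman-Kac representation then yields $T_t^c(e^{-f})(x)\geqslant e^{-\|f\|_\infty}\,E^x(e^{-\int_0^t c(X_s)\, ds})\geqslant e^{-\|f\|_\infty - t\|c\|_\infty}>0$. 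Consequently $V_t f\leqslant \|f\|_\infty + t\|c\|_\infty<\infty$, so $V_t f\in bp\cb(E)$, which is exactly what makes $(V_t)_{t\geqslant 0}$ a well-defined semigroup on $bp\cb(E)$.
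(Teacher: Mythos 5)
Your proposal is correct and follows essentially the same route as the paper: assertion $(i)$ is obtained exactly as in the text by combining Proposition \ref{prop4.1} with the correspondence $(4.2)$--$(4.3)$, and for $(ii)$ you correctly identify that the only real content is the finiteness of $V_t f$, which you establish by the same device the paper uses — dropping the nonnegative integral term in $(4.4)$ to get $H_t(e^{-f})\geqslant T_t^{c}(e^{-f})$ and then invoking the Feynman--Kac formula together with the Markovianity of $(T_t)_{t\geqslant 0}$ to conclude $V_t f\leqslant \|f\|_\infty+t\|c\|_\infty$. The extra details you supply (the semigroup identity for $(V_t)$ and the derivation of $(4.5)$ via the branching characterization of Section 3) are the formal steps the paper leaves implicit, so nothing is missing.
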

\begin{proof}
Assertion $(i)$  follows from $(4.3)$ and Proposition \ref{prop4.1}.
To prove assertion $(ii)$ it is enough to show that if  $f\in bp\cb(E)$ then $V_t f\in bp\cb(E)$. If $f \leqslant  M$, because 
$H_t (e^{-M}) \geqslant  T_t^{c}(e^{-M})$ and since the transition function of $X$ is Markovian, we have 
$V_t f\leqslant  \! V_t M \! \leqslant  \!\! -\ln(e^{-M} E^x ( e^{-\int_0^t c(X_s)ds}) ) \leqslant 
$ $M+t\|c\|_\infty.$
\end{proof}

\begin{remark} \label{rem4.4} 
$(i)$  If $v\in \cb_{\mbox{\textsf {u}}}$ is such that $\widehat{v}$ is an invariant function 
with respect to the branching semigroup $(\widehat{\mbox{\textbf{H}}_t})_{t\geqslant  0}$ 
on  $\widehat{E}$, then $v$ belongs to the domain of $\mathcal{L}$ 
(the infinitesimal generator of the base process $X$)
and
\[
(\cl -c) v +cB\widehat v =0.\eqno{(4.6)}
\]
In particular, if $B$ is given by $(3.2)$, then $v$
is the solution of the nonlinear equation
\[
(\cl -c)v +c\sum_{k\geqslant  1} q_k v^k=0.
\]
It turns out that $(\widehat{\mbox{\textbf{H}}_t})_{t\geqslant  0}$  is the main tool
in the treatment of the (nonlinear) Dirichlet problem associated
with the equation $(4.6)$;
see Proposition 6.1 and Subsection 6.2 in \cite{BeOp11} and Section 3 from  \cite{BeOp14}.

$(ii)$ 
The equation $(4.4)$ and the equality $(4.5)$ are analogous to
$(2.2)$ and respectively $(2.4)$ from \cite{DaGoLi02}, where, however, 
the forthcoming branching process having $(\widehat{\mbox{\textbf{H}}_t})_{t\geqslant  0}$ 
as transition function is used.
Assertion $(ii)$ of Corollary \ref{cor4.3}  shows that the semigroup approach for the continuous branching  
(developed by E.B. Dynkin \cite{Dy02} and P.J. Fitzsimmons \cite{Fi88}; see also \cite{Be11} and \cite{Li11}) 
is analogue to the above construction of the branching semigroup in the discrete branching case, 
due to N. Ikeda, M. Nagasawa, S. Watanabe, and M.L. Silverstein. 
Recall that in the case of the continuous branching
the family $(V_t)_{t\geqslant  0}$ is the so called ``cumulant semigroup''; for more details see Section 5 below.

$(iii)$ Assume that $E$ is an Euclidean open set,  $X$ is the Brownian motion on $E$, and $B$ is given by $(3.2)$. 
Then by $(4.5)$ and Remark \ref{rem4.2} $(ii)$ one can see that the cumulant semigroup $(V_t)_{t\geqslant  0}$ 
is formally the solution of the  
following evolution equation
\[
\frac{d}{dt} V_t f= \Delta V_t f - |\nabla V_t f |^2 +c(1-\sum_{k\geqslant 1} q_k e^{(1-k)V_t f}), \, t\geqslant 0.
\]
This should be compare with the equation satisfied by the cumulant semigroup  of a mesure-valued  continuous branching process
(cf.  $(2.2)'$ from \cite{Fi88} and $(5.1)$ below), in particular, in the case of the super-Brownian motion:
$\frac{d}{dt} V_t f= \Delta V_t f - (V_t f )^2, \, t\geqslant 0.$

$(iv)$ We refer to the survey  article \cite{BeLuOp12}
for a version of  assertion $(ii)$  of Corollary \ref{cor4.3} and for 
further connections between the continuous and 
discrete measure-valued processes.
\end{remark}

We establish now a linear version of Proposition \ref{prop4.1} which leads to a result on the perturbation
with kernels of the sub-Markovian semigroups and resolvents; 
see Appendix $(A4)$ for its proof.

\begin{proposition}\label{prop4.4}  
Let $K$ be a sub-Markovian kernel on $(E,\cb(E))$.
Then for any $f \in bp\mathcal{B}(E)$ the equation
\[
r_t(x)=T_t^{c} f (x)+\int_{0}^t T^c_{t-u}(cK{r_u})(x) du,\; t\geqslant  0,\; x\in E, \eqno{(4.7)}
\]
has a unique solution $Q_t f  \in bp \mathcal{B}(E)$,  the function  
$(t,x)\longmapsto Q_t f (x)$ is measurable and the following assertions hold.

$(i)$    The family $(Q_t)_{t\geqslant  0}$ is a  semigroup of sub-Markovian kernels on $(E, \mathcal{B}(E))$
and it is the transition function of a Borel right process with state space $E$.

$(ii)$ The function  $t\longmapsto Q_t f (x)$ is 
right continuous on $[0,\infty)$ for each  $x\in E$  if and only if the function  $t\longmapsto T_t^{c} f (x)$ has the same property.

$(iii)$  The resolvent of kernels  $\cu^o=(U^o_\alpha)_{\alpha>0}$ on $(E,\cb(E))$ induced by $(Q_t)_{t\geqslant  0}$
($U^o_\alpha =\int_0^\infty e^{-\alpha t} Q_t dt$, $\alpha>0$) 
has the property $(h1)$. 
If $\beta>0$ then $U^o_\beta =U^c_\beta+ U^c_\beta  cKU^o_\beta =U^c_\beta+ G_\beta   U^c_\beta$,
where $G_\beta$ is the bounded kernel defined as $G_\beta:= \sum_{k\geqslant  1} (U^c_\beta  cK)^k$.
We  have
$\ce(\cu^o_\beta)\subset  \ce(\cu^c_\beta)$,   $[b\ce(\cu^o_\beta)]=  [b\ce(\cu^c_\beta)]=[b\ce(\cu_\beta)]$,
and $G_\beta( \ce(\cu^c_\beta) )\subset \ce(\cu^o_\beta)$.
\end{proposition}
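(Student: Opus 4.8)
The plan is to build the solution of $(4.7)$ by successive approximation and then read off every assertion from the resulting Neumann series together with the uniqueness it provides. First I would set $Q_t^{(0)}f:=T_t^{c}f$ and $Q_t^{(n+1)}f:=\int_0^t T^c_{t-u}(cK\,Q_u^{(n)}f)\,du$, and prove by induction the estimate
\[
\|Q_t^{(n)}f\|_\infty \leqslant \frac{(\|c\|_\infty\, t)^n}{n!}\,\|f\|_\infty ,
\]
using only that $K$ is sub-Markovian, $(T_t^{c})_t$ is sub-Markovian, and $c$ is bounded; this is the linear analogue of Remark \ref{rem4.2}$(i)$. It shows that $Q_tf:=\sum_{n\geqslant 0}Q_t^{(n)}f$ converges uniformly, is jointly measurable in $(t,x)$ (each iterate is, by the measurability of $T^c$ and Fubini), solves $(4.7)$, and is its unique solution in $bp\cb(E)$ (the same estimate applied to the difference of two solutions forces it to vanish). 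Positivity and linearity of $f\longmapsto Q_tf$ then give that each $Q_t$ is a kernel.

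Next I would establish the three structural properties. For sub-Markovianity I would first record the identity $T_t^{c}1+\int_0^t T^c_{t-u}c\,du=1$, which is the inverse Laplace transform of $U^c_\beta c=1-\beta U^c_\beta 1$; feeding it through the partial sums $S^N_t:=\sum_{n\le N}Q^{(n)}_t$ and using $K1\leqslant 1$ yields by induction $S^N_t1\leqslant 1$, hence $Q_t1\leqslant 1$. The semigroup property I would obtain from uniqueness: fixing $t$, both $s\longmapsto Q_{s+t}f$ and $s\longmapsto Q_s(Q_tf)$ satisfy $(4.7)$ with datum $T_s^{c}(Q_tf)$ and the same kernel — the reduction uses the semigroup property of $(T_t^{c})_t$ and the substitution $u\longmapsto u-t$ in the integral — so they coincide. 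Assertion $(ii)$ is then immediate: the term $\int_0^t T^c_{t-u}(cK Q_uf)\,du$ is continuous in $t$ because its integrand is uniformly bounded, so $Q_tf(x)$ and $T^c_tf(x)$ differ by a continuous function of $t$ and share right continuity. The right-process claim in $(i)$ I would deduce by verifying $(h1)$ for $\cu^o$ (carried out in $(iii)$) and invoking the standard realization theorem (references in Remark \ref{rem2.2}$(i)$); one may alternatively recognize $Q_t$ as the transition function of the piecing-out of $X$ killed at rate $c$ and revived through $K$, which is manifestly a right process.

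For $(iii)$, taking the Laplace transform of $(4.7)$ (Fubini) gives $U^o_\gamma=U^c_\gamma+U^c_\gamma cK\,U^o_\gamma$ for every $\gamma>0$; the case $\gamma=\beta$ is the stated identity, and $U^o_\beta$ is automatically a bounded kernel since $U^o_\beta 1\leqslant 1/\beta$. To invert the identity I would bound the perturbation: from $U^c_\beta c=1-\beta U^c_\beta 1$ and $T_t^{c}1\geqslant e^{-t\|c\|_\infty}$ one gets $\|U^c_\beta c\|_\infty\leqslant \frac{\|c\|_\infty}{\beta+\|c\|_\infty}<1$, so $\|U^c_\beta cK\|\leqslant \frac{\|c\|_\infty}{\beta+\|c\|_\infty}<1$ and $G_\beta=\sum_{k\geqslant 1}(U^c_\beta cK)^k$ converges geometrically to a bounded kernel, whence $U^o_\beta=U^c_\beta+G_\beta U^c_\beta$. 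The inclusion $\ce(\cu^o_\beta)\subset\ce(\cu^c_\beta)$ follows by writing, at $\gamma=\beta+\alpha$, $\alpha U^c_\gamma v=\alpha U^o_\gamma v-U^c_\gamma cK(\alpha U^o_\gamma v)$: the subtracted term is nonnegative and tends to $0$ as $\alpha\to\infty$ because $U^c_\gamma c\leqslant \|c\|_\infty/\gamma$, first for bounded $v$ and then for general $v$ by truncation. The span equalities $[b\ce(\cu^o_\beta)]=[b\ce(\cu^c_\beta)]=[b\ce(\cu_\beta)]$ follow from the resolvent identities $U^c_\beta=U_\beta-U_\beta cU^c_\beta$ and $U^o_\beta=U^c_\beta+G_\beta U^c_\beta$, together with $G_\beta(\ce(\cu^c_\beta))\subset\ce(\cu^o_\beta)$ (checked directly from the companion relation $G_\gamma=U^c_\gamma cK(I+G_\gamma)$), by the type of argument used in Remark \ref{rem2.1}. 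Finally $(h1)$ holds for $\cu^o$: $1\in\ce(\cu^o_\beta)$ because $Q_t1\leqslant 1$ is right continuous with $Q_01=1$, so $\alpha U^o_{\beta+\alpha}1\to 1$; $\sigma(\ce(\cu^o_\beta))=\cb(E)$ since $\sigma(b\ce(\cu^o_\beta))=\sigma([b\ce(\cu_\beta)])=\sigma(\ce(\cu_\beta))=\cb(E)$, the base process being a Borel right process so that $\cu$ satisfies $(h1)$; and the non-branch property transfers from $\cu$ through the coincidence of the linear spans.

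The routine part is the existence, uniqueness, semigroup and sub-Markov properties, which all rest on the contraction estimate above. I expect the main obstacle to lie in the potential theory of $(iii)$ — specifically the inclusion $G_\beta(\ce(\cu^c_\beta))\subset\ce(\cu^o_\beta)$ and the transfer of the non-branch condition — since these require controlling how the kernel perturbation $cK$ reshapes the cone of excessive functions; the geometric convergence of the Neumann series, guaranteed by $\|U^c_\beta cK\|<1$, is precisely the technical input that legitimizes those manipulations.
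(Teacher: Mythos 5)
Your Picard-iteration construction is correct as far as it goes, and it is a genuinely more self-contained route than the paper's: the paper gets existence, uniqueness, joint measurability, the semigroup law, sub-Markovianity and assertion $(ii)$ in one stroke by specializing Proposition \ref{prop4.1}, observing that the kernel $\mbox{\textbf{K}}g:=K(g|_E)$ from $bp\cb(\widehat{E})$ to $bp\cb(E)$ satisfies $\mbox{\textbf{K}}\widehat{h}=Kh$, so that $(4.7)$ is exactly $(4.4)$ with $B:=\mbox{\textbf{K}}$, the iterates now being linear and hence kernels. Your contraction estimate, Gronwall uniqueness, the identity $T_t^{c}1+\int_0^tT^c_{t-u}c\,du=1$ for sub-Markovianity, the uniqueness argument for the semigroup law, and the Laplace-transform identity with the geometric bound $\|U^c_\beta cK\|\leqslant \|c\|_\infty/(\|c\|_\infty+\beta)<1$ in $(iii)$ all check out. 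The serious gap is the right-process claim in $(i)$: you propose to verify $(h1)$ and invoke a ``standard realization theorem'' via Remark \ref{rem2.2} $(i)$ --- but that remark states $(h1)$ as a \emph{necessary} condition, and $(h1)$ alone is not sufficient for a resolvent to be the resolvent of a right process (this is precisely why $(2.1)$ requires $(h2)$ and $(h3)$ in addition). The paper instead exploits the specific structure of $\cu^o$ as a kernel perturbation of the resolvent $\cu^c$ of an honest right process (the killed process), citing Proposition 5.2.4, Proposition 3.5.3 and Corollary 1.8.12 of \cite{BeBo04} for exactly this situation. Your fallback --- that $Q_t$ is ``manifestly'' the transition function of a piecing-out/revival process --- is the right intuition but not an available theorem here: as Remark \ref{rem4.11} $(i)$ points out, the piecing-out construction is established over locally compact spaces, and carrying it over to a Lusin base space is an open issue.

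Two further steps fail as written. In $(ii)$ you claim $\int_0^tT^c_{t-u}(cKQ_uf)\,du$ is continuous in $t$ ``because its integrand is uniformly bounded''; boundedness controls only the piece $\int_t^{t+h}$, while the remaining increment $\int_0^t\bigl(T^c_{t+h-u}-T^c_{t-u}\bigr)(cKQ_uf)\,du$ requires right continuity of the semigroup applied to the functions $cKQ_uf$, which is not among the hypotheses; the paper avoids this by proving right continuity only at $t=0$ (where boundedness does suffice) and propagating it by the semigroup law, cf.\ the proof of Proposition \ref{prop4.1} $(iii)$ in the Appendix. Second, your verification of $G_\beta(\ce(\cu^c_\beta))\subset\ce(\cu^o_\beta)$ from $G_\beta=U^c_\beta cK(I+G_\beta)$ exhibits $G_\beta v$ as a $\cu^c_\beta$-potential, hence $\cu^c_\beta$-excessive --- but the assertion is $\cu^o_\beta$-excessivity, which is the \emph{stronger} property (you yourself prove $\ce(\cu^o_\beta)\subset\ce(\cu^c_\beta)$, not the converse). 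The paper's argument is different: it first shows that $G_\beta U^c_\beta f=U^o_\beta f-U^c_\beta f$ is $\cu^o_\beta$-excessive for $f\in bp\cb(E)$ (a resolvent-equation computation resting on $U^c_\beta\leqslant U^o_\beta$) and then passes to increasing limits along $U^c_\beta f_n\nearrow v$. Relatedly, the non-branch part of $(h1)$ does not ``transfer through the coincidence of the linear spans''; what makes it transfer is the inclusion $\ce(\cu^o_\beta)\subset\ce(\cu^c_\beta)$ together with the sandwich $\alpha U^c_{\beta+\alpha}w\leqslant\alpha U^o_{\beta+\alpha}w\leqslant w$ for $w=\inf(u,v)$, which pins the $\cu^o_\beta$-regularization at $w$ itself once all points are non-branch for $\cu^c_\beta$.
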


If $M\in \cb(E)$, $\beta>0$, and $u\in \ce(\cu_\beta^c)$  (resp. $u\in \ce(\cu_\beta^o)$), 
let ${}^{c}\! R^M_\beta u$ (resp. ${}^{o}\! R^M_\beta u$) be the reduced function of $u$ on $M$ 
with respect to $\mathcal{U}^c_\beta$ (resp. $\mathcal{U}^o_\beta$).
Let further $v_o:=U^o_\beta 1=U^c_\beta f_o$, where $f_o:=1+cKU^o_\beta 1$,
and fix a finite measure $\lambda$ on $(E, \cb(E))$.
We denote by $c_\lambda^c$ (resp. $c_\lambda^o$) the induced capacity:
$c^c_\lambda(M):=\inf\{\lambda({}^{c}\! R_\beta^D v_o): D \mbox{ open}, M\subset D\}$
(resp. $c^o_\lambda(M):=\inf\{\lambda({}^{o}\! R_\beta^D v_o): D \mbox{ open}, M\subset D\}$).\\

A result related to the following corollary  was stated in  Proposition 3.7 from \cite{BeTr11}.

\begin{corollary}  \label{cor4.5} 
We have $c^o_\lambda \leqslant  c^c_{\lambda'} \leqslant  c_{\lambda'}$, where $\lambda':= \lambda+\lambda\circ G_\beta$.
In particular, if the capacity $c_{\lambda'}$ is tight then  the capacities $c^c_{\lambda'}$ and $c^o_\lambda$ are also tight.
\end{corollary}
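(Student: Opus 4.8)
The plan is to establish the two pointwise comparisons of reduced functions that underlie the chain $c^o_\lambda \leqslant c^c_{\lambda'} \leqslant c_{\lambda'}$, and then to read off tightness. Throughout I keep $f_o := 1 + cKU^o_\beta 1$, which is bounded and strictly positive because $U^o_\beta 1 \leqslant 1/\beta$, so that $v_o = U^o_\beta 1 = U^c_\beta f_o$; I take $u_o := U_\beta f_o$ as the reference function for the capacity $c_{\lambda'}$ associated with $\cu_\beta$. Since $U^c_\beta \leqslant U_\beta$ (killing only decreases the resolvent) we have $v_o = U^c_\beta f_o \leqslant U_\beta f_o = u_o$, and from Proposition \ref{prop4.4} I record the perturbation identity $U^o_\beta = U^c_\beta + G_\beta U^c_\beta = (I + G_\beta) U^c_\beta$.

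First I would prove $c^o_\lambda \leqslant c^c_{\lambda'}$, which is the substantial step. The crucial fact is that $I + G_\beta$ maps $\ce(\cu^c_\beta)$ into $\ce(\cu^o_\beta)$. On potentials this is exactly the perturbation identity, $(I + G_\beta)U^c_\beta g = U^o_\beta g \in \ce(\cu^o_\beta)$ for $g \in bp\cb(E)$; since every $\cu^c_\beta$-excessive function is an increasing limit of such potentials, and $G_\beta = \sum_{k\geqslant 1}(U^c_\beta cK)^k$ is a bounded kernel and hence commutes with increasing suprema, the claim extends to all of $\ce(\cu^c_\beta)$. Note that one cannot simply split $g + G_\beta g$, since $g$ alone need not lie in $\ce(\cu^o_\beta)$; it is the combination $(I+G_\beta)g$ that is $\cu^o_\beta$-excessive. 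Now fix an open set $D \supset M$ and set $g := {}^{c}\! R^D_\beta v_o \in \ce(\cu^c_\beta)$. Then $w := (I + G_\beta)g \in \ce(\cu^o_\beta)$, and because $g \geqslant v_o$ on $D$ and $G_\beta g \geqslant 0$ we have $w \geqslant g \geqslant v_o$ on $D$; thus $w$ competes in the infimum defining ${}^{o}\! R^D_\beta v_o$, giving ${}^{o}\! R^D_\beta v_o \leqslant w = (I+G_\beta)\,{}^{c}\! R^D_\beta v_o$. Integrating against $\lambda$ and using $\lambda' = \lambda + \lambda\circ G_\beta$ yields $\lambda({}^{o}\! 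R^D_\beta v_o) \leqslant \lambda'({}^{c}\! R^D_\beta v_o)$, and taking the infimum over open $D \supset M$ gives $c^o_\lambda \leqslant c^c_{\lambda'}$.

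For $c^c_{\lambda'} \leqslant c_{\lambda'}$ I would use Hunt's formula (Remark \ref{rem2.4}(ii)) for the killed process and for the base process $X$: for open $D$ and the entry time $D_D$,
\[
{}^{c}\! R^D_\beta v_o(x) = E^x\big(e^{-\beta D_D - \int_0^{D_D} c(X_s)\,ds}\, v_o(X_{D_D})\big), \qquad R^D_\beta u_o(x) = E^x\big(e^{-\beta D_D}\, u_o(X_{D_D})\big).
\]
Since $v_o \leqslant u_o$ and $e^{-\int_0^{D_D} c(X_s)\,ds} \leqslant 1$, the first expression is dominated by the second, so ${}^{c}\! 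R^D_\beta v_o \leqslant R^D_\beta u_o$ pointwise; integrating against $\lambda'$ and taking the infimum over open $D \supset M$ gives the inequality. (Equivalently, one checks $\ce(\cu_\beta) \subset \ce(\cu^c_\beta)$, so that every competitor for $R^D_\beta u_o$ is also one for ${}^{c}\! R^D_\beta v_o$.)

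Finally, the chain $c^o_\lambda \leqslant c^c_{\lambda'} \leqslant c_{\lambda'}$ gives the tightness assertion at once: if $(K_n)_n$ is an increasing sequence of compacts with $\inf_n c_{\lambda'}(E \setminus K_n) = 0$, then $c^c_{\lambda'}(E \setminus K_n)$ and $c^o_\lambda(E \setminus K_n)$ are squeezed to $0$ along the same sequence, so $c^c_{\lambda'}$ and $c^o_\lambda$ are tight as well. I expect the single genuine obstacle to be the first step, the verification that $I + G_\beta$ carries $\ce(\cu^c_\beta)$ into $\ce(\cu^o_\beta)$; the remaining arguments are monotonicity of reductions together with the two resolvent identities.
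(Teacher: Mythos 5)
Your proof is correct, and its substantive step takes a genuinely different route from the paper's. For $c^o_\lambda \leqslant c^c_{\lambda'}$ the paper does not argue directly: it invokes Proposition 5.2.5 of \cite{BeBo04}, together with Proposition \ref{prop4.4} $(iii)$, to obtain the kernel inequality ${}^{o}\! R_\beta^M\leqslant {}^{c}\! R_\beta^M + G_\beta\, {}^{c}\! R_\beta^M$ for all Borel $M$, and then integrates against $\lambda$. You instead prove exactly this inequality from scratch (for open $D$ and the function $v_o$, which is all the capacities require), via the lemma that $I+G_\beta$ maps $\ce(\cu^c_\beta)$ into $\ce(\cu^o_\beta)$; your potential-approximation argument for that lemma is sound, and it is in fact the same computation that appears inside the paper's Appendix $(A4)$, where it is shown that $U^o_\beta f_n\nearrow v+G_\beta v\in b\ce(\cu^o_\beta)$ whenever $U^c_\beta f_n\nearrow v\in b\ce(\cu^c_\beta)$. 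So your route buys self-containedness --- the external citation is replaced by an argument already latent in Proposition \ref{prop4.4} --- at the cost of two standard facts used silently: that for \emph{open} $D$ the reduced function ${}^{c}\! R^D_\beta v_o$ is genuinely $\cu^c_\beta$-excessive rather than merely supermedian (true because $\cu^c$ is the resolvent of a right process, so open sets are finely open), which is what licenses applying your lemma to it; and, in your second step, the subprocess form of Hunt's formula expressing ${}^{c}\! R^D_\beta$ through the multiplicative functional $e^{-\int_0^{\cdot} c(X_s)\,ds}$. For $c^c_{\lambda'}\leqslant c_{\lambda'}$ the paper uses precisely the order-theoretic argument you mention only parenthetically ($\ce(\cu_\beta)\subset\ce(\cu^c_\beta)$ plus $v_o\leqslant u_o$, giving ${}^{c}\! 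R^M_\beta v_o\leqslant {}^{c}\! R^M_\beta u_o\leqslant R^M_\beta u_o$); your Feynman--Kac comparison proves the same pointwise bound probabilistically and is equivalent, but it needs the right-process machinery where the paper's version needs none. The tightness conclusion is identical in both.
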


\begin{proof}
Because $\ce(\cu_\beta) \subset \ce(\cu_\beta^c)$ we get
${}^{c}\! R^M_\beta v_o\leqslant  {}^{c}\! R^M_\beta u_o\leqslant  R^M_\beta u_o$ for every  $M\in \cb(E)$
(where recall that
$u_o= U_\beta f_o\geqslant  v_o$) and therefore $c^c_{\lambda'} \leqslant  c_{\lambda'}$.
By assertion $(iii)$ of Proposition \ref{prop4.4} we may apply 
the result from  \cite{BeBo04}, Proposition 5.2.5, to obtain the inequality of kernels
${}^{o}\! R_\beta^M\leqslant   {}^{c}\! R_\beta^M + G_\beta {}^{c}\! R_\beta^M$
which leads to $c^o_\lambda(M)\leqslant  c^c_{\lambda'}(M)$.
\end{proof}

Let $\widehat{\cu}=(\widehat{U}_\alpha)_{\alpha>0}$ 
(resp.  $\widehat\cu^c=(\widehat{U}^c_\alpha)_{\alpha>0}$) 
be the sub-Markovian  resolvent of kernels on  $(\widehat{E}, \cb(\widehat{E}))$ generated by
the semigroup  $(\widehat{\mbox{\textbf{H}}_t})_{t\geqslant  0}$
(resp.  by $(\widehat{T}^c_t)_{t\geqslant  0}$).\\

Let further 
\[
\beta_1:= ||B l_1||_\infty.
\]
and assume that $B1=1$, hence $\beta_1\geqslant 1$. 
We suppose that $\beta_1>1$ and that the function $c$ is such that 
$ c < \frac{\beta_1}{\beta_1-1}$. 
Let  $(Q_t)_{t\geqslant  0}$ be the semigroup given by Proposition \ref{prop4.4}, 
with the sub-Markovian kernel $K$ on $(E,\cb(E))$ defined as
$Kf:= \frac{c}{c+\beta_1} B(l_f)$ and with $c+\beta_1$ instead of $c$. 

\begin{lemma} \label{lem4.5} 
If $B$ is given by $(3.2)$ and $c$ does not depend on $x\in E$, then
\[
Q_t f(x) = e^{-(c+\beta_1)  t}  E^x ( e^{\int_0^t cq_o(X_s)\, ds} f(X_t) ), \quad  f\in bp\cb(E), x\in E, t>0,
\]
where $q_o:= \sum_{k\geqslant  1} k q_k$, and we have $[b \ce(\cu^o)] = [b \ce(\cu_\beta)]$ for all $\beta>0$.

\end{lemma}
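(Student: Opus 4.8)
The plan is to produce $(Q_t)_{t\geqslant 0}$ explicitly by guessing the Feynman--Kac expression and then invoking the uniqueness part of Proposition \ref{prop4.4}. First I would compute the kernel $K$ in closed form. Since $B$ is given by $(3.2)$, for $f\in bp\cb(E)$ the restriction $(l_f)_k$ of $l_f$ to $E^{(k)}$ takes the value $kf(x)$ at the diagonal point $(x,\dots,x)$, because $(x,\dots,x)$ is identified with $k\delta_x$ and $\langle k\delta_x,f\rangle=kf(x)$; hence
\[
B(l_f)(x)=\sum_{k\geqslant 1}q_k(x)\,kf(x)=q_o(x)f(x).
\]
Therefore $Kf=\tfrac{c}{c+\beta_1}q_o f$ and $(c+\beta_1)Kf=c\,q_o f$, so equation $(4.7)$ of Proposition \ref{prop4.4}, read with $c+\beta_1$ in place of $c$ and with this $K$, becomes
\[
r_t(x)=T^{c+\beta_1}_t f(x)+\int_0^t T^{c+\beta_1}_{t-u}(c\,q_o\,r_u)(x)\,du .
\]

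Next I would set $\Psi_t f(x):=e^{-(c+\beta_1)t}E^x\bigl(e^{\int_0^t cq_o(X_s)\,ds}f(X_t)\bigr)$, which, as $c$ is constant, equals $E^x\bigl(e^{\int_0^t(cq_o-(c+\beta_1))(X_s)\,ds}f(X_t)\bigr)$ and is the claimed expression. Before anything else I must check $\Psi_t f$ is well defined and bounded: since $q_o\leqslant\beta_1$, the exponent is dominated by $t\bigl(c\beta_1-(c+\beta_1)\bigr)=t\bigl(c(\beta_1-1)-\beta_1\bigr)$, which is strictly negative precisely because $\beta_1>1$ and $c<\tfrac{\beta_1}{\beta_1-1}$; thus $\Psi_t f\leqslant\|f\|_\infty$ and $\Psi_t f\in bp\cb(E)$. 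This is exactly where the standing hypothesis on $c$ enters.

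Then I would verify that $\Psi$ solves the displayed equation, so that $\Psi=Q$ by uniqueness. The key ingredient is the pathwise identity $e^{A_t}=1+\int_0^t cq_o(X_u)\,e^{A_t-A_u}\,du$ for the additive functional $A_t:=\int_0^t cq_o(X_s)\,ds$, together with the cocycle relation $A_t-A_u=A_{t-u}\circ\theta_u$ and $f(X_t)=f(X_{t-u})\circ\theta_u$. Multiplying this identity by $e^{-(c+\beta_1)t}f(X_t)$, taking $E^x$, splitting $e^{-(c+\beta_1)t}=e^{-(c+\beta_1)u}e^{-(c+\beta_1)(t-u)}$, conditioning on $\Fscr_u$ and applying the Markov property turns the inner expectation into $\Psi_{t-u}f(X_u)$; a Fubini argument and the change of variable $u\mapsto t-u$ then yield
\[
\Psi_t f=T^{c+\beta_1}_t f+\int_0^t T^{c+\beta_1}_{t-u}(c\,q_o\,\Psi_u f)\,du .
\]
By the uniqueness assertion of Proposition \ref{prop4.4}, $\Psi_t f=Q_t f$, which is the stated formula. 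I expect the main obstacle to be the bookkeeping of the Markov property in the correct time direction, so that the perturbation term emerges as $T^{c+\beta_1}_{t-u}(c\,q_o\,Q_u f)$ rather than with the factors transposed.

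Finally, the equality $[b\ce(\cu^o)]=[b\ce(\cu_\beta)]$ for all $\beta>0$ follows from assertion $(iii)$ of Proposition \ref{prop4.4}, applied with $c+\beta_1$ in place of $c$, which gives $[b\ce(\cu^o_\beta)]=[b\ce(\cu^{c+\beta_1}_\beta)]$. Since $c+\beta_1$ is constant one has $\cu^{c+\beta_1}=\cu_{c+\beta_1}$, whence $\cu^{c+\beta_1}_\beta=\cu_{c+\beta_1+\beta}$, and the $\beta$-independence of $[b\ce(\cu_\beta)]$ recorded in Remark \ref{rem2.1} gives $[b\ce(\cu_{c+\beta_1+\beta})]=[b\ce(\cu_\beta)]$, closing the argument.
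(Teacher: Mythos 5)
Your identification of $Q_t$ with the Feynman--Kac semigroup is correct, and it takes a mildly different route from the paper: you verify directly (via the pathwise identity $e^{A_t}=1+\int_0^t cq_o(X_u)e^{A_t-A_u}\,du$ and the Markov property) that the candidate solves $(4.7)$ with $c+\beta_1$ in place of $c$, and then invoke the uniqueness in Proposition \ref{prop4.4}; the paper instead substitutes $k_t:=e^{(c+\beta_1)t}r_t$, recognizes the resulting equation as the one characterizing the semigroup killed at rate $c+\beta_1-cq_o$, and quotes Proposition 3.3 and equality (3.3) of \cite{Be11}. Both arguments are sound, and yours has the merit of being self-contained.

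The second assertion, however, is not established by your final paragraph, and this is a genuine gap. The lemma claims $[b\ce(\cu^o)]=[b\ce(\cu_\beta)]$, where $\ce(\cu^o)$ denotes the functions excessive with respect to the resolvent $\cu^o$ itself, i.e.\ at level zero. What Proposition \ref{prop4.4} $(iii)$ (applied with $c+\beta_1$ in place of $c$) gives you is $[b\ce(\cu^o_\beta)]=[b\ce(\cu_\beta)]$ for $\beta>0$, which is a strictly weaker statement: one always has $\ce(\cu^o)\subset\ce(\cu^o_\beta)$, but the converse fails in general --- level-zero bounded excessive functions can be scarce (for a recurrent process they reduce to constants), which is precisely why Remark \ref{rem2.1} is stated only for $\beta>0$. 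The repair is available from what you already proved: since $B1=1$ forces $q_o=\sum_k kq_k\geqslant\sum_k q_k=1$, and $q_o\leqslant\beta_1$, the killing rate in your formula satisfies $\beta_2\leqslant c+\beta_1-cq_o\leqslant\beta_1$ with $\beta_2:=c+\beta_1-c\beta_1>0$, whence $e^{-\beta_1 t}T_t\leqslant Q_t\leqslant e^{-\beta_2 t}T_t$. This sandwich yields $\ce(\cu_{\beta_2})\subset\ce(\cu^o)\subset\ce(\cu_{\beta_1})$ --- note that here it is $Q_t$ itself, not $e^{-\beta t}Q_t$, that is compared with exponentially killed semigroups, which is exactly what makes a level-zero conclusion possible --- and then Remark \ref{rem2.1} gives $[b\ce(\cu^o)]=[b\ce(\cu_{\beta_1})]=[b\ce(\cu_\beta)]$ for all $\beta>0$. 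This is the paper's argument, and it should replace your appeal to Proposition \ref{prop4.4} $(iii)$.
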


\begin{proof}
Observe first that in this case
\[
B(l_f)= q_o f,\,  f\in bp\cb(E),
\]
and equation $(4.7)$ with $c+\beta_1$ instead of $c$ and $Kf= \frac{c}{c+\beta_1} B(l_f)$  becomes
\[
k_t(x)= T_t  f (x)-\int_{0}^t T_{t-u}(b{k_u})(x) du,\; t\geqslant  0,\; x\in E,  
\]
where  $k_t:= e^{(c+\beta_1)t}r_t$ and  $b:= -cq_o$.
By Proposition 3.3 $(i)$ from \cite{Be11} the above equation has a unique solution $T^{b}_t f$ and
consequently $Q_t = T_t^{c+\beta_1- cq_o}$ for all  $t\geqslant 0$. 
The claimed expression of $Q_t$ follows now  also from \cite{Be11}, the equality $(3.3)$.
 Let $\beta_2:= c+\beta_1- c\beta_1$.  From Proposition 3.3 $(iii)$ from \cite{Be11},
 since $0<\beta_2\leqslant  c+\beta_1- cq_o \leqslant  \beta_1$, we have $T_t^{\beta_1}\leqslant  Q_t\leqslant  T_t^{\beta_2}$,
 $\ce(\cu_{\beta_2}) \subset  \ce(\cu^o )\subset  \ce(\cu_{\beta_1})$.
 By Remark \ref{rem2.1} we conclude that $[b \ce(\cu^o)]= [b \ce(\cu_{\beta_1})] = [b\ce(\cu_\beta)]$ for all $\beta>0$.
\end{proof}

In the next proposition  we prove relations between a set of excessive functions with respect to the base process $X$ 
and excessive functions with respect to the forthcoming branching process on $\widehat{E}$.
The key tool is the equality from the assertion (i) below.  
Note that a similar equality was obtained in the case of continuous branching processes in \cite{Fi88}, Proposition (2.7)
(see also \cite{Be11}, Proposition 4.2). 
It turns out that a perturbation of the generator of the base process was necessary in that case too,
however a simpler one, the perturbed semigroup $(Q_t)_{t\geqslant  0}$  being expressed with a Feynman-Kac formula, as in the particular case
discussed in  the above Lemma \ref{lem4.5}.

\begin{proposition}\label{prop4.7}  
The following assertions hold. 

$(i)$ If $f\in bp\cb(E)$ and $t>0$ then 
\[
e^{-\beta_1 t} \widehat{\mbox{\textbf{H}}_t} (l_f)= l_{Q_t f}.
\]

$(ii)$ If $\beta>0$ and $\beta':= \beta_1+\beta$ 
then the  following assertions are equivalent for every $u\in bp\cb(E)$.

$(ii.a)$ $u\in b\mathcal{E}(\cu^o_\beta)$.

$(ii.b)$  $l_u\in \mathcal{E}(\widehat\cu_{\beta'}).$ 

$(ii.c)$ For every $\alpha>0$ we have $1-e_{\alpha u}\in \mathcal{E}(\widehat\cu_{\beta'}).$ 

$(iii)$  If $u\in \ce(\cu^o_\beta)$ is a compact Lyapunov function on $E$ then 
$l_{1+u}\in \ce(\widehat\cu_{\beta'})$ is a compact Lyapunov function on $\widehat{E}$. 
\end{proposition}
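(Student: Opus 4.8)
The plan is to prove the three assertions in sequence, using (i) as the computational engine for (ii) and (iii).

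For assertion (i), I would start from the semigroup equation (4.4) defining $H_t$ and extract from it a \emph{linear} equation for $\widehat{\mathbf{H}_t}(l_f)$. The idea is to differentiate the exponential relation: apply $H_t$ to $e^{-\alpha f}$, write $H_t(e^{-\alpha f}) = 1 - \alpha\,\widehat{\mathbf{H}_t}(l_f) + o(\alpha)$ as $\alpha\downarrow 0$ (since $\widehat{\mathbf{H}_t}$ is a branching kernel and $e^{-\alpha f}\to 1$), and feed this expansion into (4.4) with $\varphi = e^{-\alpha f}$. Using that $B$ is linear and that $B\widehat{h_u}$ expands to first order through $B(l_{\cdot})$, the first-order term in $\alpha$ should satisfy
\[
\frac{\partial}{\partial\alpha}\Big|_{\alpha=0} H_t(e^{-\alpha f}) = T_t^c f + \int_0^t T_{t-u}^c\big(c\,B(l_{\,\partial_\alpha h_u|_0})\big)\,du.
\]
Writing $g_t := \widehat{\mathbf{H}_t}(l_f)$ and recalling $Kf = \frac{c}{c+\beta_1}B(l_f)$, I would check that $e^{-\beta_1 t}g_t$ solves the linear equation (4.7) with $c+\beta_1$ in place of $c$ and kernel $K$, whose unique solution is $Q_t f$ by Proposition \ref{prop4.4}. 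The factor $e^{-\beta_1 t}$ is precisely what converts the $(c+\beta_1)$-killing of $Q_t$ back into the $c$-killing governing $H_t$. This reduction to the uniqueness in Proposition \ref{prop4.4} is the main obstacle: justifying the differentiation under the branching-kernel action and matching the integral equations rigorously is the delicate part, and I expect the cleanest route is to verify directly that $e^{-\beta_1 t}\widehat{\mathbf{H}_t}(l_f)$ satisfies (4.7) and invoke uniqueness rather than manipulating derivatives loosely.

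For assertion (ii), the equivalence of (ii.a) and (ii.b) follows almost immediately from (i): the resolvent identities give $\widehat{U}_{\beta'} (l_f) = \int_0^\infty e^{-\beta' t}\widehat{\mathbf{H}_t}(l_f)\,dt = \int_0^\infty e^{-\beta t} l_{Q_t f}\,dt = l_{U^o_\beta f}$, so $\widehat\cu_{\beta'}$ acts on linear functions $l_u$ exactly as $\cu^o_\beta$ acts on $u$, transported through the linear isometry $u\mapsto l_u$. Hence $u$ is $\cu^o_\beta$-excessive iff $l_u$ is $\widehat\cu_{\beta'}$-excessive (checking the supermedian inequality $\alpha \widehat{U}_{\beta'+\alpha} l_u \leqslant l_u$ and the limit $\alpha\to\infty$ pointwise, both of which transfer directly). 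For (ii.c), I would use that $\widehat{\mathbf{H}_t}(e_{\alpha u}) = e_{V_t(\alpha u)}$ from (4.5) of Corollary \ref{cor4.3}, so $\widehat{\mathbf{H}_t}(1 - e_{\alpha u}) = 1 - e_{V_t(\alpha u)}$, and excessiveness of $1 - e_{\alpha u}$ reduces to monotonicity and limit properties of the cumulant semigroup $V_t$; the link back to (ii.a) comes by differentiating $1 - e_{\alpha u}$ at $\alpha = 0$, which returns $\alpha l_u$ to leading order, tying (ii.c) to (ii.b).

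For assertion (iii), I would combine (ii) with a level-set analysis. Since $u\in\ce(\cu^o_\beta)$ is a compact Lyapunov function and $1\in\ce(\cu^o_\beta)$ (the constant is $\cu^o_\beta$-excessive because $Q_t$ is conservative enough here; one checks $1+u\in\ce(\cu^o_\beta)$ as a sum of excessive functions), part (ii.b) gives $l_{1+u}\in\ce(\widehat\cu_{\beta'})$. It remains to verify the compact level-set property on $\widehat E$. A configuration $\mathbf{x} = \sum_{k\leqslant m}\delta_{x_k}$ satisfies $l_{1+u}(\mathbf{x}) = \sum_k (1+u(x_k)) = m + \sum_k u(x_k)$, so the bound $l_{1+u}(\mathbf{x})\leqslant n$ forces both $m\leqslant n$ (controlling the number of particles, hence which finitely many components $E^{(m)}$ are involved) and $\sum_k u(x_k)\leqslant n$, which since $u\geqslant 0$ confines each coordinate to the relatively compact set $[u\leqslant n]$. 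Thus $[l_{1+u}\leqslant n]$ is contained in a finite union $\bigcup_{m\leqslant n} ([u\leqslant n]^{m})^{(m)}$ of relatively compact sets in the disjoint-union topology of $\widehat E$, giving relative compactness. I expect this last step to be routine once the counting role of the ``$+1$'' is made explicit; the only care needed is that the topology on $\widehat E$ is the disjoint union over the symmetric powers, so compactness is checked componentwise.
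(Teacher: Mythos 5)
Your strategy for all three parts coincides with the paper's: for $(i)$ the paper likewise shows that the $\lambda$-derivative at $0$ of $\lambda\longmapsto H_t(e^{-\lambda f})$ satisfies the linear equation $(4.7)$ with $c+\beta_1$ in place of $c$ and the kernel $K$, and then invokes the uniqueness from Proposition \ref{prop4.4}; for $(ii)$ it transports excessiveness through $u\longmapsto l_u$ using $(i)$ (deferring details to the superprocess analogue in \cite{Be11}); and its proof of $(iii)$ is exactly your level-set counting, with the ``$+1$'' bounding the number of particles so that $[l_{1+u}\leqslant n]$ sits inside $E^{(0)}\cup K_n\cup (K_n)^{(2)}\cup\ldots\cup(K_n)^{(n)}$.

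There is, however, one concrete step missing in your treatment of $(i)$. Equation $(4.7)$ is an equation for functions on $E$, so your uniqueness argument can only identify the restriction $\widehat{\mbox{\textbf{H}}_t}(l_f)|_E$ with $e^{\beta_1 t}Q_t f$, whereas the assertion to be proved is an identity of functions on all of $\widehat{E}$. To pass from $E$ to $\widehat{E}$ one needs that $\widehat{\mbox{\textbf{H}}_t}$ maps linear functions to linear functions, i.e.
\[
\widehat{\mbox{\textbf{H}}_t}(l_f)= l_{\widehat{\mbox{\textbf{H}}_t}(l_f)|_E},
\]
which is precisely $(4.9)$ in the paper's proof, obtained there by differentiating the product $\widehat{\mbox{\textbf{H}}_t}(e_{\lambda f})(\mu)=\prod_{k}\widehat{\mbox{\textbf{H}}_t}(e_{\lambda f})|_E(x_k)$ at $\lambda=0$; equivalently it follows from the branching property $(\widehat{\mbox{\textbf{H}}_t})_{\mu+\nu}=(\widehat{\mbox{\textbf{H}}_t})_\mu * (\widehat{\mbox{\textbf{H}}_t})_\nu$ together with $\widehat{\mbox{\textbf{H}}_t}1=1$, since $l_f$ is additive under convolution. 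You gesture at the branching property when writing the first-order expansion, but without this additivity step your conclusion holds only on single-particle configurations. A second, smaller omission is in $(ii)$: the implication from $(ii.a)$ to $(ii.c)$ needs a quantitative inequality, not merely ``monotonicity of $V_t$''. One uses Jensen's inequality for the probability measures $(\widehat{\mbox{\textbf{H}}_t})_{\delta_x}$ together with $(i)$ to get $V_t(\alpha u)\leqslant e^{\beta_1 t}\alpha\, Q_t u\leqslant e^{\beta' t}\alpha u$, and then the elementary bound $1-e^{-\lambda s}\leqslant \lambda(1-e^{-s})$ for $\lambda\geqslant 1$ to conclude $e^{-\beta' t}\widehat{\mbox{\textbf{H}}_t}(1-e_{\alpha u})\leqslant 1-e_{\alpha u}$; your converse direction, letting $\alpha\downarrow 0$ in $\frac{1}{\alpha}(1-e_{\alpha u})\nearrow l_u$, is fine. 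Both gaps are fillable, and with them your argument is essentially the paper's proof.
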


\begin{proof}
$(i)$ We show first that if $f\in bp\cb(E)$ and $N$ is a kernel on  $\widehat{E}$ or from $\widehat{E}$ to $E$, such that
$N(l_1)$ is a bounded function then
\[
N(e_{\lambda f})'_{\lambda=0}:= \lim_{\lambda \to 0}  \frac{N(e_{\lambda f})- N1}{\lambda} = - N(l_f).  \eqno{(4.8)}
\]
Indeed, the assertion follows since  $\frac{1- e_{\lambda f}}{\lambda} \nearrow  l_f$ pointwise on $\widehat{E}$.
The Lipschitz property of $H_t$  (Proposition \ref{prop4.1} $(i)$)  implies
\[
||H_t(e^{-f})-1||_\infty  \leqslant  {\beta_o} t ||1-e^{- f}||_\infty \leqslant  {\beta_o} t ||f||_\infty,\quad
\left| \frac{H_t(e^{-\lambda f}) -1}{\lambda} \right|\leqslant  {\beta_o} t ||f||_\infty.
\]
Applying $(4.8)$ with  $N=\widehat{\mbox{\textbf{H}}_t}$ and since $\widehat{\mbox{\textbf{H}}_t}(e_{\lambda f})|_E= H_t(e^{-\lambda f})$
we deduce that $\widehat{\mbox{\textbf{H}}_t}(l_f)|_E \leqslant   {\beta_o} t ||f||_\infty$ and we claim that
\[
\widehat{\mbox{\textbf{H}}_t}(l_f)= -\widehat{\mbox{\textbf{H}}_t}(e_{\lambda f})'_{\lambda=0}= l_{\widehat{\mbox{\textbf{H}}_t}(l_f)|_E}. \eqno{(4.9)}
\]
By $(4.8)$ it is sufficient to show the second equality.
Let $\mu\in E^{(m)}$, $\mu=\sum_{k=1}^m \delta_{x_k}$. 
Using again $(4.8)$ for the kernel $\widehat{\mbox{\textbf{H}}_t}|_E$ and since $ \widehat{\mbox{H}_t} 1=1$ we get
\[
\widehat{\mbox{\textbf{H}}_t}(e_{\lambda f})'_{\lambda=0}(\mu)= 
(\prod_{k=1}^m \widehat{\mbox{\textbf{H}}_t}(e_{\lambda f})|_E (x_k) )'_{\lambda=0}=
\]
\[
(\widehat{\mbox{\textbf{H}}_t}(e_{\lambda f})|_E)'_{\lambda=0} (x_1) \cdot \widehat{\mbox{\textbf{H}}_t}(e_{ 0})(x_2)\cdot \ldots \cdot
\widehat{\mbox{\textbf{H}}_t}(e_{ 0})(x_m)+ \ldots= 
\]
\[
-[ \widehat{\mbox{\textbf{H}}_t}(l_f)(x_1)+\ldots + \widehat{\mbox{\textbf{H}}_t}(l_f)(x_m)]=
- l_{\widehat{\mbox{\textbf{H}}_t}(l_f)|_E}(\mu).
\]

Fot each  $t \geqslant  0$ define the function $\vf_t: \R_+ \longrightarrow bp\cb(E)$ by $\vf_t(\lambda):= V_t(\lambda f)$.
We clearly have $H_t(e^{-\lambda f})= e^{-\vf_t(\lambda)}$ and  from Proposition \ref{prop4.1} we obtain
\[
e^{-\vf_t(\lambda)}= T_t^{c}(e^{-\lambda f})+ \int_0^t T^c_{t-u}cB \widehat{\mbox{\textbf{H}}_t}(e_{\lambda f}) du, \quad t\geqslant  0.
\]
We have $\vf_t(0)=0$,  and using $(4.9)$   we get $\vf'_t(0)= \widehat{\mbox{\textbf{H}}_t}(l_f)|_E$.
By derivation of the above equation  in $\lambda=0$ and multiplying with $e^{-\beta_1 t}$, applying $(4.8)$ for 
$N=T^c_{t-u}cB \widehat{\mbox{\textbf{H}}_t}$, and again from $(4.9)$ we conclude that
\[
e^{-\beta_1 t} \vf'_t (0)=T^{c+\beta_1}_tf +\int_0^t T^{c+\beta_1}_{t-u} (c+\beta_1) K ({e^{-\beta_1 u} \vf'_u(0)}) du, \quad t\geqslant  0,
\]
where $Kf:= \frac{c}{c+\beta_1} B(l_f)$ and $(T^{c+\beta_1}_t)_{t\geqslant  0}$ is
the transition function of the process obtained by killing $X$ with
the multiplicative functional $(e^{-\beta_1 t -\int_0^t c(X_s)ds})_{t\geqslant  0}$, $T_t^{c+\beta_1}=e^{-\beta_1t} T_t^{c}$.
Hence $e^{-\beta_1 t} \vf'_t (0)$ verifies $(4.7)$ with $c+\beta_1$ instead of $c$ and the kernel $K$. 
Proposition \ref{prop4.4} implies  $e^{-\beta_1 t} \vf'_t (0)=Q_t f$ and by $(4.9)$
$e^{-\beta_1 t} \widehat{\mbox{\textbf{H}}_t}(l_f)= l_{e^{-\beta_1 t} \widehat{\mbox{\textbf{H}}_t}(l_f)|_E}= l_{Q_t f}.$\\

The proof of $(ii)$ is similar to that  of Corollary 4.3 from \cite{Be11}, using the above assertion $(i)$.

$(iii)$ Let $u\in \ce(\cu^o_\beta)$ be a Lyapunov function on $E$ and for each $n\in \N^*$ consider the compact set 
$K_n$ such that $[u\leqslant  n]\subset K_n$. Since $l_1=m$ on $E^{(m)}$, $m\geqslant  1$, we conclude that
$[l_{1+u}\leqslant  n]$ is included  in the compact set  $E^{(0)}\cup K_n \cup (K_n)^{(2)}\cup \ldots \cup (K_n)^{(n)}$ of $\widehat{E}$.
\end{proof}

Let $\ca:=\overline{[b\ce(\cu_\beta)]}$ ($=$ the closure in the supremum norm of $[b\ce(\cu_\beta)]$). 
Note that $\ca$ is an algebra; see, e.g. Corollary 23 from \cite{Be11}. 
By Remark \ref{rem2.1}  $\ca$ does depend on $\beta>0$
and using also Proposition \ref{prop4.4} $(iii)$ we get 
$\ca=\overline{[b\ce(\cu^c_\beta)]}=\overline{[b\ce(\cu^o_\beta)]}$.
Recall that $1-e^{-u} \in \ce(\cu^o_\beta)$ provided that $u\in \ce(\cu^o_\beta)$ (cf. Proposition \ref{prop4.7} $(ii)$) 
and therefore $\{ e^{-u}: u \in\ce(\cu^o_\beta) \} \subset \ca \cap \cb_{\mbox{\textsf {u}}}$.
We need a supplementary hypothesis:\\

\noindent
$(*)$ \textit{ 
There exist a countable subset $\cf_o$ of $b\ce(\cu^o_\beta)$ which is additive, $0\in \cf_o$,  and separates the finite measures on $E$ 
and a separable vector lattice $\mathcal{C}\subset \ca$  such that $\{ e^{-u}: u \in \cf_o \} \subset \mathcal{C}$ and 
$T_t^c \vf, T_t^{c}( cB\widehat\vf)\in  \mathcal{C}$ for all $\vf\in \mathcal{C}\cap \cb_{\mbox{\textsf {u}}}$ and $t>0$.
}

\begin{proposition} \label{prop4.8} 
The following assertions hold.

$(i)$ If  $c$  does  not depend on $x \in E$ and $B$ is given by $(3.2)$
with $\sum_{k\geqslant  1} ||q_k||_\infty  <\infty$,
 then $(*)$ is verified taking any countable subset $\cf_o$ of $b\ce(\cu^o_\beta)$ which is additive, $0\in \cf_o$,  
 and separates the finite measures on $E$, 
 and as $\mathcal{C}$  the  closure in the supremum norm of a separable vector lattice $\mathcal{C}_o\subset \ca$  
 such that $\{ e^{-u}: u \in \cf_o \} \subset \mathcal{C}_o$,
 $(q_k)_{k\geqslant  1}\!\! \subset \mathcal{C}_o$ and   $T_t (\mathcal{C}_o)\subset \mathcal{C}_o$ for all $t>0$.

$(ii)$   Assume that  $E$ is a locally compact space,  $(T_t^{c})_{t\geqslant  0}$  a $C_0$-semigroup on $C_0(E)$,  $c\in bC(E)$,
and  $B\widehat\vf$ and $B(l_\vf)$ also  belong to $ C_0(E)$  for all $\vf\in C_0(E) \cap \cb_{\mbox{\textsf {u}}}$.
Then $(*)$ holds taking $\mathcal{C}=C_0(E)\oplus \R$ and for any countable subset 
$\cf_o$ of $C_0(E) \cap b\ce(\cu^o_\beta)$ which is additive, $0\in \cf_o$,  and separates the  finite measures on $E$.

$(iii)$ If condition $(*)$ holds then $V_t (\cf_o) \subset \overline{\mathcal{C}}$
(the closure in the supremum norm of $\mathcal{C}$) for every $t\geqslant  0$.
\end{proposition}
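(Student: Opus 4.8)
The plan is to read $(*)$ as a list of closure properties of a function space $\mathcal C$, to verify them directly in $(i)$ and $(ii)$, and to reserve the substance for $(iii)$. The organizing principle is to take $\mathcal C$ to be a \emph{uniformly closed subalgebra} of $\ca$ containing the constants (note $1=e^{-0}\in\{e^{-u}:u\in\cf_o\}$, since $0\in\cf_o$); such a $\mathcal C$ is automatically a vector lattice, and—crucially—its algebra structure is exactly what keeps the two nonlinear operations of the problem, the powers $\vf^k$ appearing in $B\widehat\vf$ and the logarithm defining $V_t$, inside the space.

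For $(i)$ I would check each clause of $(*)$ with $\mathcal C=\overline{\mathcal C_o}$, where $\mathcal C_o$ may additionally be assumed closed under products (replacing it by the separable subalgebra it generates inside the algebra $\ca$ if necessary). Then $\mathcal C$ is a separable closed subalgebra and lattice contained in $\ca$, with $\{e^{-u}:u\in\cf_o\}\subset\mathcal C$. Since $c$ is constant, $T_t^c=e^{-ct}T_t$, so $T_t(\mathcal C_o)\subset\mathcal C_o$ gives $T_t^c(\mathcal C)\subset\mathcal C$ by continuity; and $B\widehat\vf=\sum_{k\geqslant 1}q_k\vf^k$ on $E$ converges uniformly (as $\vf\leqslant 1$ and $\sum_k\|q_k\|_\infty<\infty$), with each $q_k\vf^k\in\mathcal C$ by the algebra property, whence $T_t^c(cB\widehat\vf)\in\mathcal C$. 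For $(ii)$ the space $\mathcal C=C_0(E)\oplus\R$ is already a closed algebra and lattice: I would verify $\mathcal C\subset\ca$ from the $C_0$-semigroup hypothesis, note $e^{-u}=1+(e^{-u}-1)\in\mathcal C$ because $u\in C_0(E)$, use the Feller property to get $T_t^c(\mathcal C)\subset\mathcal C$, and observe that $cB\widehat\vf\in C_0(E)$ (a bounded continuous factor times the $C_0$-function $B\widehat\vf$), so $T_t^c(cB\widehat\vf)\in C_0(E)\subset\mathcal C$.

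The heart is $(iii)$. Fix $u\in\cf_o$, set $\vf:=e^{-u}\in\mathcal C\cap\cb_{\mbox{\textsf{u}}}$, and recall $V_t u=-\ln H_t(e^{-u})$; it thus suffices to place $H_t(e^{-u})$ in $\overline{\mathcal C}$ and then apply $-\ln$. First I would extend the stability in $(*)$ from $\mathcal C$ to its closure: the maps $\psi\mapsto T_t^c\psi$ and $\psi\mapsto T_t^c(cB\widehat\psi)$ are Lipschitz on $\cb_{\mbox{\textsf{u}}}$—for the second because $|B\widehat\psi-B\widehat\chi|\leqslant\|Bl_1\|_\infty\,\|\psi-\chi\|_\infty$, the estimate underlying Proposition \ref{prop4.1}—so the inclusions into $\mathcal C$ propagate to inclusions into the closed set $\overline{\mathcal C}$. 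Then I would show by induction that every Picard iterate from Remark \ref{rem4.2}$(i)$ satisfies $H_s^n\vf\in\overline{\mathcal C}\cap\cb_{\mbox{\textsf{u}}}$ for all $s$: the free term $T_t^c\vf$ lies in $\mathcal C$, and the integral $\int_0^t T_{t-s}^c(cB\widehat{H_s^n\vf})\,ds$, being a Bochner integral of an $\overline{\mathcal C}$-valued integrand, again lies in the closed separable space $\overline{\mathcal C}$. By Remark \ref{rem4.2}$(i)$, $H_t^n\vf\to H_t(e^{-u})$ uniformly, so $H_t(e^{-u})\in\overline{\mathcal C}$.

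It remains to pass through the logarithm. From the estimate in the proof of Corollary \ref{cor4.3} one has $H_t(e^{-u})\geqslant e^{-\|u\|_\infty-t\|c\|_\infty}=:\delta>0$, so $w:=1-H_t(e^{-u})\in\overline{\mathcal C}$ obeys $\|w\|_\infty\leqslant 1-\delta<1$ and
\[
V_t u=-\ln\!\big(1-w\big)=\sum_{n\geqslant 1}\frac{w^n}{n}
\]
converges uniformly; each $w^n\in\overline{\mathcal C}$ because $\overline{\mathcal C}$ is a closed algebra, giving $V_t u\in\overline{\mathcal C}$. I expect the genuine obstacle to be exactly this nonlinearity—keeping $\vf^k$ and $\ln$ inside the space—which is why I insist that $\mathcal C$ be a closed subalgebra rather than merely a lattice, and why I write both the branching term and the cumulant as uniformly convergent power series, after first securing the uniform lower bound $\delta>0$ that renders $-\ln$ analytic on the relevant range.
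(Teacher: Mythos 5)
Your overall route coincides with the paper's: in $(i)$ you combine the algebra structure of $\mathcal{C}$ with the uniform convergence of $\sum_{k\geqslant 1}q_k\vf^k$; in $(ii)$ you use the $C_0$ structure; in $(iii)$ you run an induction on the Picard iterates $H^n_t$ from $(A3.4)$, pass to the uniform limit via Remark \ref{rem4.2} $(i)$, and then apply $-\ln$ inside $\overline{\mathcal{C}}$. This is exactly the paper's proof, and your added details (the Lipschitz extension of the two maps in $(*)$ from $\mathcal{C}$ to $\overline{\mathcal{C}}$, the lower bound $H_t(e^{-u})\geqslant e^{-\|u\|_\infty-t\|c\|_\infty}$, the power series for the logarithm) correctly fill in what the paper leaves implicit.

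The one genuine problem is your treatment of the algebra property, which you promote to an ``organizing principle'' but which your argument does not actually secure. In $(iii)$ the pair $(\cf_o,\mathcal{C})$ is \emph{given} by $(*)$, and $(*)$ only requires $\mathcal{C}$ to be a separable vector lattice; you cannot ``insist that $\mathcal{C}$ be a closed subalgebra'', yet your final step needs $w^n\in\overline{\mathcal{C}}$. Likewise, in $(i)$ your ``may additionally be assumed closed under products'' is not a legitimate reduction: replacing $\mathcal{C}_o$ by the subalgebra it generates could a priori enlarge the closure, the hypothesis $T_t(\mathcal{C}_o)\subset\mathcal{C}_o$ does not transfer to that subalgebra (kernels do not respect products), and in any case the proposition asserts $(*)$ for $\overline{\mathcal{C}_o}$ itself. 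Both defects are cured by the classical fact the paper invokes when it writes ``$\mathcal{C}$ is an algebra'': a uniformly closed vector sublattice of bounded functions containing the constants (here $1=e^{-0}\in\mathcal{C}$, since $0\in\cf_o$) is automatically an algebra --- on the bounded range of $f$ approximate $t\longmapsto t^2$ uniformly by piecewise linear functions, which are lattice combinations of affine functions, so $f^2\in\overline{\mathcal{C}}$, and polarization then gives products. With this fact, your replacement in $(i)$ changes nothing (the generated subalgebra has the same closure as $\mathcal{C}_o$) and your use of the algebra structure in $(iii)$ is licensed; without it, both steps fail as written. A smaller point, shared with the paper's own gloss of $(ii)$: ``the Feller property gives $T_t^c(\mathcal{C})\subset\mathcal{C}$'' silently requires $T_t^c 1\in C_0(E)\oplus\R$, i.e., control of the constant direction, which does not follow from $T_t^c(C_0(E))\subset C_0(E)$ alone.
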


\begin{proof}
By $(3.2)$ 
$B\widehat \vf= \sum_{k\geqslant  1} q_k \vf^k$ and $\mathcal{C}$ is an algebra.
Therefore $B\widehat\vf \in  \mathcal{C} \cap \cb_{\mbox{\textsf {u}}}$ provided that $\vf\in \mathcal{C} \cap \cb_{\mbox{\textsf {u}}}$,
so,  assertion $(i)$ holds.

Assertion $(ii)$ is clear, observing that $C_0(E)\oplus\R\subset \ca$. 
Note that by Remark \ref{rem4.2} $(i)$ we have $Q_t(C_0(E))\subset C_0(E)$ for all $t>0$ and using $(4.7)$
one can see that $(Q_t)_{t\geqslant  0}$  is also a $C_0$-semigroup on $C_0(E)$.

$(iii)$  Using condition $(*)$ it follows that $H_t^n (e^{-u})\in \overline{\mathcal{C}}\cap \cb_{\mbox{\textsf {u}}}$
 for all $n\geqslant  0$ and $u\in \cf_o$, where $H_t^n$ is given by $(A3.4)$.
Since the sequence $(H^n_t (e^{-u}))_n$ converges uniformly  (see  Remark \ref{rem4.2} $(i)$), 
$H_t (e^{-u})$ also belongs to $\overline{\mathcal{C}}$  which is an algebra and
we conclude that  $V_t u= -\ln  H_t (e^{-u}) \in \overline{\mathcal{C}}$.
\end{proof}

We state now  the main result of this paper,  the existence of a discrete branching process
associated with the base process $X$, the branching kernel $B$ an the killing kernel $c$.

\begin{theorem}\label{thm4.7} 
If  the base process $X$  is  standard
and  condition $(*)$ holds  then there exists a  branching standard process  with state space $\widehat{E}$,  
having $(\widehat{\mbox{\textbf{H}}_t})_{t\geqslant  0}$ as transition function.
\end{theorem}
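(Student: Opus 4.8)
The plan is to show that the sub-Markovian resolvent $\widehat{\cu}=(\widehat{U}_\alpha)_{\alpha>0}$ associated with $(\widehat{\mbox{\textbf{H}}_t})_{t\geqslant 0}$ satisfies the three hypotheses $(h1)$, $(h2)$, $(h3)$ of $(2.1)$ --- which produces a c\`adl\`ag process $\widehat{X}$ on $\widehat{E}$ having $\widehat{\cu}$ as resolvent, equivalently $(\widehat{\mbox{\textbf{H}}_t})_{t\geqslant 0}$ as transition function --- and then to upgrade $\widehat{X}$ to a standard process by Lemma \ref{lem-quasileft}. Fix $\beta>0$ and put $\beta':=\beta_1+\beta$; I work throughout with $\widehat{\cu}_{\beta'}$, since Proposition \ref{prop4.7}$(ii)$ identifies its excessive functions: for $u\in b\ce(\cu^o_\beta)$ both $l_u$ and $1-e_{\alpha u}$ ($\alpha>0$) belong to $\ce(\widehat{\cu}_{\beta'})$. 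Recall also that, $B1=1$ being assumed, the branching semigroup is Markovian, so $1\in\ce(\widehat{\cu}_{\beta'})$ and hence $e_u=\widehat{e^{-u}}=1-(1-e_u)\in[b\ce(\widehat{\cu}_{\beta'})]$ for every $u\in\cf_o$.

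First I would assemble the countable family required by $(h1)$ and $(h3)$. Because $\cf_o$ separates the finite measures on $E$, the coherent states $\{e_u:u\in\cf_o\}$ separate the points of $\widehat{E}$, while the component-counting function $l_1$ (equal to $m$ on $E^{(m)}$) distinguishes the summands $E^{(m)}$ and the functions $l_u$, $u\in\cf_o$, recover the quotient topology inside each symmetric power from the topology- and point-separation properties of $\cf_o$ on $E$; together they give a countable $\cf\subset[b\ce(\widehat{\cu}_{\beta'})]$, with $1\in\cf$, generating the disjoint-union topology of $\widehat{E}$, whence $\sigma(\ce(\widehat{\cu}_{\beta'}))=\cb(\widehat{E})$. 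The non-branch-point clause of $(h1)$ is inherited from the corresponding property of the base resolvent through the multiplicative structure $\widehat\vf(\mu+\nu)=\widehat\vf(\mu)\widehat\vf(\nu)$ of $\widehat{E}$.

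Next, $(h2)$: standardness of $X$ makes the capacity $c_{\lambda'}$ tight, so by Corollary \ref{cor4.5} the capacity $c^o_\lambda$ is tight too, and $(2.2)$ (via Remark \ref{rem2.4}$(i)$) yields a compact Lyapunov function $u\in\ce(\cu^o_\beta)$ on $E$; Proposition \ref{prop4.7}$(iii)$ then transports it to the compact Lyapunov function $l_{1+u}\in\ce(\widehat{\cu}_{\beta'})$ on $\widehat{E}$, which is exactly $(h2)$. For $(h3)$ I would take $u_o:=l_{v_o}$ with $v_o:=U^o_\beta 1$ --- this is the finite potential $\widehat{U}_{\beta'}(l_1)$ by Proposition \ref{prop4.7}$(i)$ --- together with the same $\cf$, the required separation being the statement that the energy functional $L_{\beta'}$, tested on $\cf$, separates the finite $\widehat{\cu}_{\beta'}$-excessive measures on $\widehat{E}$. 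Granting $(h1)$--$(h3)$, condition $(2.1)$ delivers the c\`adl\`ag process $\widehat{X}$.

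Finally I would promote $\widehat{X}$ to a standard process through Lemma \ref{lem-quasileft}, with multiplicative separating family $\ck:=\{e_u:u\in\cf_o\}\subset\overline{[\cf]}$ (multiplicative since $\cf_o$ is additive, $e_ue_v=e_{u+v}$). The decisive hypothesis $\widehat{\mbox{\textbf{H}}_t}f\in\overline{[\cf]}$ for $f\in\ck$ is exactly where Proposition \ref{prop4.8}$(iii)$ is indispensable: since $\widehat{\mbox{\textbf{H}}_t}(e_u)=\widehat{H_t(e^{-u})}=e_{V_t u}$ and $V_t u\in\overline{\mathcal{C}}$, the transformed coherent state is attached to a function of the prescribed class $\overline{\mathcal{C}}$, so, choosing $\cf$ to contain the coherent states of a countable sup-dense subset of $\overline{\mathcal{C}}\cap\cb_{\mbox{\textsf {u}}}$, one places $e_{V_t u}$ in $\overline{[\cf]}$ (in the locally compact setting of Proposition \ref{prop4.8}$(ii)$ one may instead invoke the Markovian form of Lemma \ref{lem-quasileft}, as then $V_t u\in C_0(E)\oplus\R$ makes $e_{V_t u}$ continuous). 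The branching property of the transition function is automatic from Corollary \ref{cor4.3}$(i)$, completing the proof. I expect the two genuinely delicate points to be the verification of $(h3)$ and the uniform control of the coherent-state approximation in the last step, where the estimate on $E^{(m)}$ degrades linearly in $m$; both are tamed by the compact Lyapunov function of $(h2)$, which confines the relevant behaviour to the compact sublevel sets $E^{(0)}\cup K_n\cup(K_n)^{(2)}\cup\ldots\cup(K_n)^{(n)}$.
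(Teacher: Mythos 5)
Your skeleton is the paper's own: verify $(h1)$--$(h3)$ for $\widehat{\cu}_{\beta'}$, get a c\`adl\`ag process from $(2.1)$, and then obtain quasi-left continuity from Lemma \ref{lem-quasileft} with $\ck=\{e_u: u\in\cf_o\}$, using Proposition \ref{prop4.8}$(iii)$ and the identity $\widehat{\mbox{\textbf{H}}_t}(e_u)=e_{V_t u}$. Your $(h2)$ step (tightness of $c_{\lambda'}$ from standardness of $X$, Corollary \ref{cor4.5}, then $(2.2)$ and Proposition \ref{prop4.7}$(iii)$) and your final quasi-left continuity step are essentially the paper's arguments and are fine. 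However, the two points that constitute the real content of the proof are exactly where your proposal has gaps. First, the non-branch-point clause of $(h1)$ is \emph{not} ``inherited from the base resolvent through the multiplicative structure.'' That condition concerns arbitrary pairs $u,v\in\ce(\widehat{\cu}_{\beta'})$ --- e.g.\ functions of the form $l_w$, or increasing limits of potentials --- whose pointwise infimum at a configuration $\delta_{x_1}+\delta_{x_2}$ does not factor through any product formula, so there is no direct transfer from $E$ to $\widehat{E}$. The paper instead invokes Corollary 3.6 of \cite{St89}, which reduces $(h1)$ to the uniqueness of charges and the specific solidity of potentials for $\mbox{\textsf{Exc}}(\widehat{\cu}_{\beta'})$; proving the latter is the bulk of the work (the additive semigroup $\ca_o$ generated by $\{V_t u : u\in\cf_o,\ t\geqslant 0\}$, the inclusion $(4.10)$ --- which itself needs the right continuity of $s\longmapsto H_s(e^{-v})(x)$, hence hypothesis $(*)$ and Proposition \ref{prop4.8}$(iii)$ --- and a Daniell-theorem representation of the dominated excessive measure). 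Nothing of this sort appears in your proposal.

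Second, for $(h3)$ you only restate what must be shown --- that $L_{\beta'}$ tested on your family $\cf$ separates finite $\widehat{\cu}_{\beta'}$-excessive measures of finite energy --- and give no argument. This is genuinely delicate: the paper first passes to a Ray topology on $E$ relative to $\cu^o$ (a step you omit, and which is also what makes the coherent states of a countable dense subset $\mathcal{R}_o$ of a Ray cone generate the topology of $\widehat{E}$, via Lemma 02 of \cite{INW68}), then represents $\xi$ and $\eta$ as potentials of measures $\mu,\nu$ on the entrance space $\widehat{E}_1$ using $(A1.1.a)$ and the finiteness $(4.12)$, and finally upgrades the equality $\widehat{L}_{\beta'}(\xi,e_u)=\widehat{L}_{\beta'}(\eta,e_u)$, $u\in\mathcal{R}_o$, to the equality $(4.13)$ by approximating $e_{V_t u}$ uniformly by $e_{f_n}$ with $f_n\in(\mathcal{C}_o-\mathcal{C}_o)_+$ and controlling the error by $\widetilde{l}_1$ on $[\widetilde{l}_1<\infty]$. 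Moreover, this step quotes the assertion $(4.11)$, which is established \emph{during} the specific-solidity argument --- so the two gaps are linked: without the $(h1)$ machinery you also lack the tool needed for $(h3)$. Finally, your closing remark that the compact Lyapunov function of $(h2)$ ``tames'' these two points is not accurate; it plays no role in either of them.
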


\begin{proof}
According to $(2.1)$, in order to show that
$(\widehat{\mbox{\textbf {H}}_t})_{t\geqslant  0}$ is the  transition function of a 
c\`adl\`ag process with state space $\widehat{E}$,
 we  have to verify conditions $(h1)$-$(h3)$ for $\widehat{\cu}_{\beta'}$.

We show first that $(h1)$ is satisfied by  $\widehat\cu_{\beta'}$,
in particular, all the points of $\widehat{E}$ are non-branch points for $\widehat\cu_{\beta'}$.
We proceed as in the proof of Proposition 4.5 from \cite{Be11}.
According to Corollary 3.6 from \cite{St89}, it will be sufficient
to prove that the uniqueness of charges and the specific solidity
of potentials properties hold for
$\widehat{\mathcal{U}}_{\beta'}=(\widehat{U}_{\beta'+\alpha})_{\alpha> 0}$, 
where recall that  
$\widehat{U}_\alpha =\int_{0}^{\infty}e^{-\alpha t}\widehat{\mbox{\textbf {H}}_t} \;dt. $\\

\noindent
\textit{The uniqueness of charges property.}
  We have to show that if $\mu,\nu$ are two finite measures on $\widehat{E}$ such that $\mu\circ
\widehat{U}_{\beta'}=\nu\circ\widehat{U}_{\beta'}$  then $\mu=\nu.$ 
We get  $\mu(1)=\nu(1)$ and by Hunt's approximation theorem 
$\mu(F)=\nu(F)$ for every $F\in [b\ce(\widehat{\cu}_{\beta'})]$.
We already observed that the multiplicative  family of functions 
\[
\widehat\cf_o:=\{e_u : u\in \cf_o\}
\]
is a subset of $[b\ce(\widehat\cu_{\beta'})]$.
Therefore  $\mu(e_u)=\nu(e_u)$ for every $u\in \cf_o$
and $\cb(\widehat{E}) =\sigma(\widehat\cf_o)=\sigma(\widehat\ce(\cu_{\beta'}))$.
By a monotone class argument  we conclude that $\mu=\nu$.\\

\noindent
\textit{The specific solidity of potentials.}
We have to show that if $\xi, \mu\circ \widehat{U}_{\beta'} \in \mbox{\textsf {Exc}}( \widehat{\cu}_{\beta'})$ 
and $\xi\prec  \mu\circ \widehat{U}_{\beta'},$ then $\xi$ is a potential.
Here $\prec$ denotes the specific order relation on the convex cone $\mbox{\textsf {Exc}}( \widehat{\cu}_{\beta'})$ of all
$\widehat{\cu}_{\beta'}$-excessive measures:
if $\xi, \xi' \in \mbox{\textsf {Exc}}( \widehat{\cu}_{\beta'})$ then $\xi\prec \xi'$ means that there exists
$\eta\in \mbox{\textsf {Exc}}( \widehat{\cu}_{\beta'})$ such that $\xi+\eta=\xi'$.

Let  $\mathcal{A}_o$  be the additive semigroup generated by $\{ {V_tu}: u\in \cf_o, t\geqslant  0 \}$
and $[\widehat\ca_o]$ the
vector space spanned by $\{e_v: v \in \ca_o \}$. 
Then $[\widehat\ca_o]$ is an algebra of functions on $\widehat{E}$, $1 \in [\widehat\ca_o]$,  and since
$\widehat\cf_o \subset [\widehat\ca]$  we have $\sigma([\widehat\ca])=\cb(\widehat{E})$.
We prove now that
\[
[\widehat\ca_o]\subset  [b\ce(\widehat\cu_{\beta'})]. \eqno{(4.10)}
\]
Since $\widehat\cf_o\subset [b\ce(\widehat\cu_{\beta'})]$ we get from  $(4.5)$ that
$e_{V_tu}\in  [b\ce(\widehat\cu_{\beta'})] $ for all $u\in  \cf_o$.
By Corollary 2.3 from \cite{Be11} the vector space $[b\cs(\widehat\cu_{\beta'})] $ is an algebra
and therefore $e_v\in  [b\cs(\widehat\cu_{\beta'})] $ for all $v\in \ca_o$.
It remains to prove that the map 
$s \longmapsto \widehat{\mbox{\bf H}_s}  (e_v)(\mu)$ 
is right continuous on $[ 0, \infty )$ for every  $v\in  \ca_o$ and $\mu\in \widehat{E}$. 
Because $\widehat{\mbox{\bf H}_s}  (e_v)=\widehat{H_s(e^{-v})}$, 
we have  to prove the right continuity of the mapping
$s \longmapsto H_s (e^{-v})(x)$,  $x\in E$.
According to  Proposition \ref{prop4.1} $(iii)$
it will be sufficient to show that the map 
$s \longmapsto T^c_s  (e^{-v})(x)$ 
is right continuous for every $v \in \ca_o$ and $x\in E$.
This last right continuity property is satisfied 
since by Proposition \ref{prop4.8} $(iii)$ the function 
$e^{-v}$ belongs to the algebra $\ca$.

Let $\xi,\mu\circ\widehat  U_{\beta'  }\in \mbox{\textsf{
Exc}}(\widehat\cu_{\beta' })$, $\xi \prec\mu\circ\widehat
U_{\beta'  }$. We may suppose that $\mu(1)\leqslant  1$; if it
is not the case, then $\ds\mu=\sum_{n}\mu_n \ds$ with
$\mu_n(1)\leqslant  1$ for all $n$ and by Ch. 2 in \cite{BeBo04} there
exists a sequence $(\xi_n)_n \subset \mbox{\textsf {Exc}}(\widehat
\cu_{\beta' })$ such that $\ds \xi=\sum_{n} \xi_n \ds$ and
$\xi_n\prec \mu_n\circ \widehat{U}_{\beta}$ for every n. Let
$\varphi_{\xi}:\ce(\widehat\cu_{\beta' })\longrightarrow
\overline {{\R}}_{+}$ the functional defined by
$\varphi_{\xi}(F):=\widehat {L}_{\beta'  }(\xi,F)$,
$F\in\ce(\widehat \cu_{\beta'  })$, where $\widehat {L}_{\beta'}$ 
denotes the energy functional associated with
$\widehat\cu_{\beta'  }$.
By $(4.10)$ we may extend $\varphi_{\xi}$ to
an increasing linear functional on $[\widehat\ca_o]$. 
Let $\cm$ be the closure of $[\widehat\ca_o]$ with respect to the supremum norm.
Clearly, $\cm$ is a vector lattice and we claim that
$\varphi_{\xi}$ extends to a positive linear functional on $\cm$.
Indeed, if $(F_n)_n \subset [\widehat\ca_o]$ is a sequence
converging uniformly to zero and we consider a sequence
$(\nu_k\circ\widehat {U}_{\beta'  })_k \subset \mbox{\textsf
{Exc}}(\widehat\cu_{\beta' })$, $\nu_k\circ\widehat{U}_{\beta'} 
\nearrow \xi$, then we have
$|\varphi_\xi
(F_n)|=\sup_{k}|\nu_k(F_n)|\leqslant \liminf_{k}\nu_k(|F_n|)\leqslant  \varepsilon \liminf_{k}\nu_k(1)=
\varepsilon\widehat{L}_{\beta'
}(\xi,1)\leqslant \varepsilon\mu(1)\leqslant \varepsilon,
$
provided that $n\geqslant  n_0$ and $||F_n||_{\infty}<\varepsilon$ for all $n\geqslant  n_0$.
Since $\xi\prec \mu\circ\widehat{U}_{\beta'  }$ we have
$\varphi_{\xi}(F)\leqslant  \mu(F)$ for every $F \in \cm_{+}$.
Consequently,  if $(F_n)_n \subset \cm_+$ is a sequence decreasing
pointwise to zero, then $\varphi_{\xi}(F_n)\searrow 0$. 
By Daniell's  Theorem there exists a measure $\nu$ on $(\widehat{E},\cb(\widehat{E}))$ such that
$\varphi_{\xi}(F)=\nu(F)$ for all $F\in\cm$. 
In particular, if $u \in \cf_o$ then $\widehat{L}_{\beta'} 
(\xi,\widehat{\mbox{\textbf {H}}_t}(e_u))=\varphi_{\xi}(e_{V_tu})=\nu(\widehat{\mbox{\textbf {H}}_t}(e_u))$ 
and therefore
$\widehat{L}_{\beta'}(\xi,\widehat{U}_{\beta'}(e_u))= 
\lim_{k}\nu_k(\widehat{U}_{\beta'}(e_u))= 
\int_{0}^{\infty}{e^{-\beta' t}\lim_{k}\nu_k(\widehat{\mbox{\textbf {H}}_t}(e_u))dt}=
\int_{0}^{\infty}e^{-\beta'  t}\widehat{L}_{\beta'}(\xi,\widehat{\mbox{\textbf {H}}_t}(e_u))dt= 
\nu(\widehat{U}_{\beta'  }(e_u)).
$
We conclude that $\xi=\nu\circ\widehat{U}_{\beta'  }$.
Observe that actually we proved he following assertion:\\

\noindent
${(4.11)}$ \textit{ 
If $\xi, \eta$ are two finite measures from $\mbox{\textsf {Exc}}(\widehat{\cu}_{\beta'})$
and $\widehat{L}_{\beta'} (\xi, \widehat{\mbox{\textbf {H}}_t}(e_u))=
\widehat{L}_{\beta'} (\eta, \widehat{\mbox{\textbf {H}}_t}(e_u))$  for all   $u\in \cf_o$ and $t\geqslant  0$, then $\xi=\eta$.
}\\

Because $X$ is a Hunt process, Theorem $(47.10)$ in \cite{Sh88} implies  that $X$ 
has c\`adl\`ag trajectories in any  Ray topology (see $(A1.2)$ in Appendix).
Consider a Ray topology $\ct({\mathcal R})$ with respect to $\cu^{o}$, which is finer than the original
topology and  it is generated by a Ray cone ${\mathcal R}\subset b\ce(\cu^{o}_\beta)$ such that
$\cf_o\subset {\mathcal R}$.  So, without loosing the generality, 
we may assume in the sequel that the original topology of $E$ is a Ray one.

We check now condition $(h2)$.
Let $\lambda \in \widehat{E}$, $\lambda\neq {\bf 0}$,  and set as before $\lambda'=\lambda+ \lambda\circ G_\beta$.
Since the base process $X$ on $E$ has c\`adl\`ag trajectories
the capacity $c_{\lambda'}$ is tight (see Remark \ref{rem2.2})
and by Corollary \ref{cor4.5} the capacity
$c^o_\lambda$ is also tight. 
According to  $(2.2)$ and  Remark \ref{rem2.4} (i) there exists a compact Lyapunov function $u\in \ce(\cu^o_\beta)\cap L^1(\lambda)$.
Proposition \ref{prop4.7} $(ii)$ implies that
$l_u\in \ce(\widehat\cu_{\beta'})$ is a compact Lyapunov function on $\widehat{E}$ and $l_u(\lambda)<\infty$,
hence $(h2)$ holds.

We show that $(h3)$ also holds for $\widehat{\cu}_\beta$.
We take $l_1$ as the function $u_o$; observe  that by  Proposition \ref{prop4.7}  $(ii)$  
we have $l_1\in \ce(\widehat{\cu}_{\beta'})$ and clearly $l_1$
is a real-valued function.
Let $\mathcal{C}_o$ be a countable subset of  $b\ce(\cu^o_{\beta})$ such that  $\cf_o\subset \mathcal{C}_o$, $\mathcal{C}_o$ is additive,  
and  $p\mathcal{C}$ is included in the closure in the supremum norm of $( \mathcal{C}_o - \mathcal{C}_o)_{+}$.
Let further $\mathcal{R}_o$  be a countable dense subset of the Ray cone
$\mathcal{R}$ such that $\mathcal{C}_o\subset  \mathcal{R}_o$.
We may consider  $\widehat{\mathcal{R}}_o:=\{e_u : u\in \mathcal{R}_o \}$  as the countable set  $\cf$  from $(h2)$.
Note that since $\mathcal{R}_o$ generates the (Ray) topology on $E$, by Lemma 02 from \cite{INW68}
(see also the proof of Lemma 2.4 from \cite{BeOp11}), one can see that $\widehat{\mathcal{R}}_o$ generates the 
topology of $\widehat{E}$.
Let further $\xi, \eta$ be  two finite $\ce(\widehat{\cu}_{\beta'})$-excessive measures such that 
 $\widehat{L}_{\beta'} (\xi, e_u)= \widehat{L}_{\beta'} (\eta,e_ u)$ for all $u\in\mathcal{R}_o$ and
\[
\widehat{L}_{\beta'} (\xi+\eta, l_1)<\infty.  \eqno{(4.12)}
\]
To show that $\xi=\eta$ we can now proceed as in the proof of Theorem 4.9 from \cite{Be11}, Step I, page 699;
this procedure was also used in  the proof of Theorem 3.5 from \cite{BeLuOp12}.
Because  the $\sigma$-algebra $\cb(\widehat{E})$ is generated by the multiplicative family $\widehat\cf_o$, 
a monotone class argument implies that $\xi=\eta$ provided  that
\[
 \xi(e_u)= \eta(e_u) \mbox{ for all }  u\in \cf_o.
\]
By $(4.11)$  the above equality holds if
\[
\widehat{L}_{\beta'} (\xi, \widehat{\mbox{\textbf{H}}_t}(e_u))= 
\widehat{L}_{\beta'} (\eta, \widehat{\mbox{\textbf{H}}_t}(e_u)) \mbox{ for all }  u\in \cf_o \mbox{ and } t\geqslant  0. \eqno{(4.13)}
\]
From  $(4.12)$ and  $(A1.1.a)$  there exist two
measures $\mu$ and $\nu$ on $\widehat{E}_1$ such that $\xi=\mu\circ\widehat{U}^1_{\beta' }$ and 
$\eta=\nu\circ\widehat{U}^1_{\beta' }$. 
Let further $\widetilde{\mathcal{C}}_o:= \{\widetilde{e}_u \, :\,  u \in\mathcal{C}_o\}$. 
Because $\widetilde{\mathcal{C}}_o$ is a multiplicative class of functions on $\widehat{E}_1$ and 
$\mu(\widetilde{e}_u)=\widehat L_{\beta'}(\xi, e_u)= \widehat L_{\beta'}(\eta, e_u)=\nu(\widetilde{e}_u)$
for every $u \in \mathcal{C}_o$, by the monotone class
theorem we have
\[
\mu(F)=\nu (F) \mbox{ for all } F\in\sigma(\widetilde{\mathcal{C}}_o). \eqno{(4.14)}
\]
If $u\in \cf_o$ then by Proposition \ref{prop4.8} $(iii)$  there exists a
sequence $(f_n)_n \subset ( \mathcal{C}_o - \mathcal{C}_o)_{+}$ converging uniformly to $V_t u$.
Note that if $f \in ( \mathcal{C}_o - \mathcal{C}_o)_{+}$  then $e_f$  has a finely continuous extension
$\widetilde{e}_f$ from $\widehat{E}$ to $\widehat{E}_1$.  Since $e_u\in [b\ce(\widehat\cu_{\beta'})]$, using 
by $(4.5)$ we get that $e_{V_t u}$ belongs to $ [b\ce(\widehat\cu_{\beta'})]$ and by $(A1.1.c)$ 
it has a unique  extension
$\widetilde{e}_f$ from $\widehat{E}$ to $\widehat{E}_1$
(by fine continuity too).
As a consequence, and using $(A3.2)$, for every $\lambda \in \widehat{E}$ we have
$|e_{f_n}(\lambda) - e_{V_t u}(\lambda) |\leqslant  
||f_n- V_t u||_{\infty}\cdot
l_1(\lambda)$, hence $|\widetilde{e}_{f_n}-\widetilde{e}_{V_tu}|\leqslant 
||f_n-V_tu||_{\infty}\cdot \widetilde{l}_1$ on $\widehat{E}_1$. 
It follows that $(\widetilde{e}_{f_n})_n$ converges  pointwise to
$\widetilde{e}_{V_tu}$ on the set
$[\widetilde{l}_1<\infty]\in\sigma(\widetilde{\mathcal{C}}_o)$. 
From $(4.12)$ we get $(\mu+\nu)(\widetilde{l}_1)=\widehat {L}_{\beta }(\xi+\eta ,l_1)<\infty$, hence 
$\widetilde{l}_{1}<\infty$  $(\mu+\nu)$-a.e. 
Therefore, $1_{[\widetilde{l}_1<\infty]}\cdot\widetilde{e}_{V_tu}$ is
$\sigma(\widetilde{\mathcal{C}}_o)$-measurable and by $(4.14)$ we now deduce that
$\mu(\widetilde{e}_{V_tu})=\nu (\widetilde{e}_{V_tu})$ for all $u\in \cf_o.$ 
We conclude that $(4.13)$ holds, so $\xi=\eta$.
Applying $(2.1)$, $\widehat{\cu}$ is the resolvent of a standard process 
with state space $\widehat{E}$.

The quasi-left continuity follows by Lemma \ref{lem-quasileft}, taking $\widehat{\cf}_o$ as the multiplicative set $\ck$, 
since by Proposition \ref{prop4.1}  $(ii)$ the semigroup $(\widehat{\mbox{\textbf {H}}_t})_{t\geqslant  0}$  is Markovian.
\end{proof}

\begin{remark}  \label{rem4.11} 
$(i)$ For constructions of branching Markov processes we refer to
\cite{AsHe83}, \cite{INW68}, \cite{INW68a}, \cite{INW69}, and \cite{Si68}.
In particular, for locally compact base space, Theorem \ref{thm4.7} is
very closely related with Theorems 2.2--2.5 and Theorems 3.3--3.5  in \cite{INW68a}, where
the branching process is obtained by a path-wise piecing out procedure, 
starting from a canonical diagonal (branching) process on $\widehat{E}.$
Because in \cite{INW68a} it is not assumed any Feller or $(*)$ condition, 
it is of interest to show that the piecing out  procedure carry over to a Lusin base space. 
We thank the anonymous referee for suggesting us this comment.
Note that the diagonal (branching) process on $\widehat{E}$ was already essentially used in \cite{BeOp11} and \cite{BeOp14}, in the case of Lusin spaces.

$(ii)$ The extra point  ${\bf 0}$ is a trap for the branching process $X$ on $\widehat{E}$;  see also Theorem 1 from \cite{INW68}.
Indeed, the assertion follows since, with the notations from Proposition \ref{prop4.7},  
the mapping $l_1$ is $\widehat{\mathcal{U}}_{\beta'}$-excessive and we have $\{{\bf 0}\}= [l_1=0]$, so, the set $\{{\bf 0}\}$ is absorbing.
\end{remark}

\section{Application: continuous branching as base process} 
In this section we give an example of  a branching Markov process, 
having as base space the set of all finite configurations of positive finite measures on a topological space. 
Note that an example of branching type process  on this space  was given in \cite{BeOp11},
obtained by perturbing a diagonal semigroup with a branching kernel.

Let $Y$ be a standard (Markov) process with state space a Lusin
topological space $F$, called {\it spatial motion}. 
We fix a  {\it branching mechanism}, that is, a function $\Phi:F\times
[0,\infty)\longrightarrow {\R}$ of the form
$$
\Phi(x,\lambda)=-b(x)\lambda-a(x)\lambda^{2}+\int^{\infty}_{0}(1-e^{-\lambda
s}-\lambda s) N(x,ds)
$$
where $a {\geqslant} 0$ and $b$ are bounded $\cb(E)$-measurable functions
and  $N: {p}\cb((0,\infty))\longrightarrow {p}\cb(F)$ is a kernel
such that $N(u\land u^2)\in  {bp}\cb(F)$. Examples of branching
mechanisms are $\Phi(\lambda)=-\lambda^{\alpha}$ for $1<\alpha\leq
2$.

We first present the measure-valued branching Markov process associated with the
spatial motion  $Y$ and the branching mechanism $\Phi$, the $(Y,\Phi)$-{superprocess},
a standard process with state space $M(F)$, the space of all
positive finite measures on $(F,\cb(F))$, endowed with the weak
topology (cf. \cite{Fi88} and \cite{Dy02},
see also  \cite{Be11}).
For each $f \in {bp}\cb(F)$ the equation
$$
v_t{(x)}=P_t{f(x)}+\int_{0}^{t}{P_s(x,\Phi(\cdot, v_{t-s}))}ds,\ \
\ t\geqslant 0, \ \ \ x\in F,
$$
has a unique solution $(t,x)\longmapsto N_t f(x)$ jointly
measurable in $(t,x)$ such that $\ds \sup_{0 \leq s\leq
t}||v_s||_{\infty}$ $<\infty \ds$ for all $t>0$; we have denoted by
$(P_t)_{t\geqslant 0}$ the transition function of the spatial motion $Y$. 
Assume that $Y$ is conservative, that is $P_t1=1$.
The mappings $f \longmapsto N_t f$, $t\geqslant 0$, 
 form a nonlinear semigroup
of operators on ${bp}\cb(F)$ and the above equation is formally equivalent with
$$
\left\{
\begin{array}{l}
\frac{d}{dt}{v_t{(x)}}=\mbox{\sf L}  v_t(x)+ \Phi(x, v_t(x))\\[2mm]

v_0=f, 
\end{array}\nonumber
\right. \eqno{(5.1)}
$$
where $\mbox{\sf L}$ is the infinitesimal generator of the spatial
motion $Y$. For every $t \geqslant 0$ there exists a unique kernel
$T_t$ on $(M(F), \cm(F))$ such that
$$
T_t(e_f)=e_{N_t f}, \ \ \ f\in
{bp}\cb(F),  \eqno{(5.2)}
$$
where for a function $g\in bp\cb(F)$ the exponential function $e_g$ is defined on $M(F)$ as in Section 3.
Since the family $(N_t)_{t\geqslant  0}$ is a (nonlinear) semigroup on
${bp}\cb(F)$, $(T_t)_{t \geqslant  0}$ is a linear semigroup of kernels
on $M(F)$.
Suppose  that $F$ is a locally compact space,
$(P_t)_{t\geqslant 0}$ is a $C_0$-semigroup on $C_0(F)$, and  $a$, $b$, and $N$  do not depend on $x\in F$. 
We may assume  that $b\geqslant 0$.
Arguing as in the proof of Proposition 4.8 from \cite{Be11} one can see that
$N_t(C_0(F)) \subset C_0(F)$  and 
that $N_t(b\ce(\cu_{b'})) \subset b\ce(\cu_{b'})$ for every $t\geqslant 0$, 
where $b':=b+\beta$, with $\beta>0$.
Then $(T_t)_{t \geqslant  0}$ is the transition function of a standard
process with state space $M(F)$, called $(Y,\Phi)$-{\it
superprocess}; see, e.g.,  \cite{Fi88},  \cite{Be11}, and \cite{BeLuOp12}.
In addition, the  $(Y,\Phi)$-{superprocess}  is a branching process on $M(F)$, i.e.,
$T_t$ is a branching kernel on $M(F)$ for all $t>0$. 
Recall that the nonlinear semigroup $(N_t)_{t\geq 0}$ is called 
the {\it cumulant semigroup} of this branching process.

We can apply now the results from Section 4, starting with the
$(Y,\Phi)$-{superprocess} as base process with state space
$E:=M(F)$.

\begin{corollary} \label{cor:cont-disc} 
Let $c$ and $(q_k)_{k\geqslant 1}$ be positive real numbers such that  $\sum_{k\geqslant 1} q_k =1$,
$\sum_{k\geqslant 1} kq_k =:q_o<\infty$, and $0< \beta < c+q_o- cq_o$.
Then there exists a discrete branching process with state space $\widehat{M(F)}$, the set of all finite
configurations of positive finite measures on $F$, 
associated to $c$ and $(q_k)_{k\geqslant 1}$, and with base process the $(Y,\Phi)$-{superprocess}.
\end{corollary}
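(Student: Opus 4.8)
The plan is to obtain the process as a direct application of Theorem \ref{thm4.7}, taking as base process $X$ the $(Y,\Phi)$-superprocess on the state space $E:=M(F)$. By the discussion preceding the corollary this superprocess is a standard process, and it is conservative: since $\Phi(x,0)=0$ we have $N_t0=0$, hence $T_t1=T_t(e_0)=e_{N_t0}=1$, so the standing hypotheses of Section 4 are met. For the branching mechanism on $\widehat E=\widehat{M(F)}$ I would take the kernel $B$ of $(3.2)$ with the constant coefficients $(q_k)_{k\geqslant 1}$ and the constant killing rate $c$. Then $Bl_1\equiv\sum_{k\geqslant 1}kq_k=q_o$, so $(4.1)$ holds and $\beta_1=\|Bl_1\|_\infty=q_o$, while $B1=\sum_{k\geqslant 1}q_k=1$; moreover $\beta_2:=c+\beta_1-c\beta_1=c+q_o-cq_o$, so the hypothesis $0<\beta<c+q_o-cq_o$ is exactly the requirement $0<\beta<\beta_2$ that makes the perturbation of Lemma \ref{lem4.5} and the choice $\beta'=\beta_1+\beta$ admissible (it forces $c<\beta_1/(\beta_1-1)$ in the genuinely branching case $q_o>1$). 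With these identifications the whole apparatus of Section 4 (Proposition \ref{prop4.4}, Lemma \ref{lem4.5}, Proposition \ref{prop4.7}) is available, so it only remains to verify condition $(*)$.

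Since $E=M(F)$ is a Polish but \emph{not} locally compact space, the Feller-type criterion Proposition \ref{prop4.8} $(ii)$ is unavailable; instead I would invoke Proposition \ref{prop4.8} $(i)$, whose hypotheses hold here because $c$ is constant and $\sum_{k\geqslant 1}\|q_k\|_\infty=\sum_{k\geqslant 1}q_k=1<\infty$. This reduces $(*)$ to exhibiting, first, a countable additive family $\cf_o\subset b\ce(\cu^o_\beta)$ with $0\in\cf_o$ that separates the finite measures on $M(F)$, and second, a separable vector lattice (in fact subalgebra) $\mathcal{C}_o\subset\ca$ with $(q_k)_{k\geqslant 1}\subset\mathcal{C}_o$, $\{e^{-u}:u\in\cf_o\}\subset\mathcal{C}_o$, and $T_t(\mathcal{C}_o)\subset\mathcal{C}_o$ for all $t>0$.

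The construction would exploit the exponential structure $(5.2)$, $T_t(e_g)=e_{N_tg}$, of the superprocess. By the continuous-branching analogue of Proposition \ref{prop4.7} $(ii)$ (cf. \cite{Fi88} and \cite{Be11}), for every bounded excessive function $h$ of the spatial motion $Y$ and every $\alpha>0$ the function $1-e_{\alpha h}$ on $M(F)$ is excessive for the superprocess, whence $e_{\alpha h}=1-(1-e_{\alpha h})$ lies in $\ca$; since $e_ge_{g'}=e_{g+g'}$ and, by Section 5, $N_t$ maps bounded excessive functions of $Y$ into themselves, the family of such exponentials is a multiplicative subset of $\ca$ carried into itself by $(T_t)_{t\geqslant 0}$. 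For $\mathcal{C}_o$ I would therefore take the subalgebra generated by the constants and by a countable subfamily $\{e_g:g\in\mathcal{G}_0\}$ of these exponentials, with $\mathcal{G}_0$ a countable family of bounded excessive functions of $Y$ adapted to $(N_t)_{t\geqslant 0}$; for $\cf_o$ I would take the additive family generated by the functions $1-e_{\alpha h}$, with $h$ in a countable (Ray-type) family of excessive functions of $Y$ separating the points of $F$ and $\alpha\in\Q_+$, so that the multiplicative functions $\{e_u:u\in\cf_o\}$ separate the finite measures on $M(F)$ by a Laplace-functional and monotone-class argument. Granting $(*)$, Theorem \ref{thm4.7} then delivers a branching standard process on $\widehat{M(F)}$ with transition function $(\widehat{\mbox{\textbf{H}}_t})_{t\geqslant 0}$, which is the asserted discrete branching process.

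The main obstacle is the verification of $(*)$, and within it the simultaneous demand that $\mathcal{C}_o$ be separable and invariant under the superprocess semigroup. The tension is that $(T_t)_{t\geqslant 0}$ is not strongly continuous in the supremum norm on $M(F)$ (the total mass is unbounded, so $g\mapsto e_g$ is not uniformly continuous), so a countable exponential family cannot be closed under $T_t$ for every real $t$ by a naive approximation in the time variable. The way through is to rely on the exact identity $T_te_g=e_{N_tg}$ together with the invariance of $(N_t)_{t\geqslant 0}$ on the excessive functions of $Y$, which keep each generator of $\mathcal{C}_o$ inside the exponential class and thereby yield $T_t(\mathcal{C}_o)\subset\mathcal{C}_o$ directly, without requiring continuity in $t$; reconciling this invariance with the separability of $\mathcal{C}_o$ and with the separation of the finite measures on $M(F)$ by $\cf_o$ is the technical core of the argument.
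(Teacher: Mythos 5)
Your overall strategy is the paper's: apply Theorem \ref{thm4.7} with the $(Y,\Phi)$-superprocess on $E=M(F)$ as base process, take $B$ as in $(3.2)$ with the constants $(q_k)_{k\geqslant 1}$ and constant killing $c$, note $\beta_1=\|Bl_1\|_\infty=q_o$ and $B1=1$, and observe that the hypothesis $0<\beta<c+q_o-cq_o=\beta_2$ is exactly what (through Lemma \ref{lem4.5}, via $\ce(\cu_\beta)\subset\ce(\cu_{\beta_2})\subset\ce(\cu^o)$) places the exponential-type excessive functions inside $\ce(\cu^o_\beta)$; then reduce everything to condition $(*)$, which you attack with exponentials $e_w$ of excessive functions of the spatial motion and the cumulant identity $(5.2)$. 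Whether one checks $(*)$ directly (as the paper does) or routes through Proposition \ref{prop4.8} $(i)$ (as you do) is immaterial. Up to this point your identifications are correct.

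The genuine gap is precisely the step you label ``the technical core'' and do not resolve. Your $\mathcal{C}_o$ is the algebra generated by the constants and a \emph{countable} family $\{e_g:\, g\in\mathcal{G}_0\}$; for $T_t(\mathcal{C}_o)\subset\mathcal{C}_o$ you need $T_te_g=e_{N_tg}$ to lie in $\mathcal{C}_o$, and since (as you yourself point out) $h\longmapsto e_h$ is not uniformly continuous on $M(F)$, uniform approximation of $N_tg$ is useless: $N_tg$ would have to belong \emph{exactly} to the additive semigroup generated by $\mathcal{G}_0$ for every real $t>0$. That semigroup is countable, while $\{N_tg:\, t>0,\ g\in\mathcal{G}_0\}$ is in general an uncountable family of distinct functions; so ``invariance of $(N_t)_{t\geqslant 0}$ on the excessive functions of $Y$'' only keeps $e_{N_tg}$ in the exponential class over \emph{all} of $b\ce(\cw_{b'})$, not in your countably generated algebra, and your claimed ``way through'' fails. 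The paper's proof avoids this by never asking the invariant algebra to be countably generated: it takes $\mathcal{C}$ to be the supremum-norm closure of the span of $\{e_w:\, w\in b\ce(\cw_{b'})\cap C_0(F)\}$, the exponential class over the \emph{whole} cone, which is carried into itself by each $T_t$ exactly, because the Section 5 standing assumptions ($F$ locally compact, $(P_t)_{t\geqslant 0}$ a $C_0$-semigroup, spatially constant $a$, $b$, $N$) give $N_t(C_0(F))\subset C_0(F)$ and $N_t(b\ce(\cw_{b'}))\subset b\ce(\cw_{b'})$; the countable data enter only through $\cf_o$, the additive semigroup generated by $\{1-e_w:\, w\in\mathcal{R}_o\}$ with $\mathcal{R}_o$ a countable additive dense subset of a Ray cone $\mathcal{R}\subset b\ce(\cw_{b'})$ chosen so that $[\mathcal{R}\cap C_0(F)]$ is dense in $C_0(F)$, and the Banach-algebra structure of $\mathcal{C}$ yields $\{e^{-u}:\, u\in\cf_o\}\subset\mathcal{C}$ (via $e^{e_{w}}=\sum_{n}e_{nw}/n!$). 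A secondary imprecision in your $\cf_o$: a family of excessive functions of $Y$ ``separating the points of $F$'' is too weak to make the $e_u$, $u\in\cf_o$, separate finite measures on $M(F)$; one needs the family to separate measures on $F$, which is what the density of $[\mathcal{R}\cap C_0(F)]$ in $C_0(F)$ provides. As written, your proposal stops exactly where the paper's essential construction begins, so it is incomplete.
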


\begin{proof}
We apply Thoerem \ref{thm4.7}, so, we have to check condition $(*)$.
Let $\mathcal{R}$ be a Ray cone with respect to the resolvent $\cw=(W_\alpha)_{\alpha>0}$ of the process $Y$ on $F$,
constructed as in the proof of Proposition 4.8 from \cite{Be11},  $\mathcal{R}\subset b\ce(\cw_{b'})$,
such that $[\mathcal{R}\cap C_0(F)]$ is dense in $C_0(F)$.
Let $\mathcal{R}_o$ be a countable, additive, dense subset of 
$\mathcal{R}$.  
Then  $\{e_r:\, r\in \mathcal{R}_o\}$ is a multiplicative set of functions on $E$ and separates the measures on $E$.
Let further $\mathcal{C}$ be the closure in the supremum norm of the vector space spanned by 
 $\{e_w:\, w\in b\ce(\cw_{b'}) \cap C_0(F)\}$ and denote by $\cu= (U_\alpha)_{\alpha>0}$ the resolvent of
 the $(Y,\Phi)$-superprocess on $E$.
 By Corollary 4.4  from \cite{Be11} $1-e_w\in \ce(\cu_\beta)$ for all $w\in b\ce(\cw_{b'})$.
 Therefore $\mathcal{C}\subset \ca$ and  we may take as $\cf_o$ the additive semigroup generated by the set
 $\{1-e_w:\, w\in \mathcal{R}_o\}$. From $(5.2)$ and the above considerations 
 $T_t (\mathcal{C})\subset \mathcal{C}$ and since $\mathcal{C}$  is a Banach algebra 
 we clearly have $\{ e^{-u}:\, u\in \cf_o \}\subset \mathcal{C}$, hence condition $(*)$ holds.
\end{proof}

\section*{Appendix}

\noindent
\textbf{(A1) Excessive  measures, Ray cones.}
Let $\cu=(U_\alpha)_{\alpha\geqslant  0}$ be a sub-Markovian resolvent of kernels on $(E,\cb(E))$ such that condition $(h1)$ holds.

Let  $\ex$ be the set of all \textit{ $\cu$-excessive measures} on $E$:
$\xi\in \ex$ if and only if it is a $\sigma$-finite measure on
$(E,\cb(E))$ such that $\xi \circ \alpha U_\alpha \leqslant  \xi$ for all $\alpha>0$. 
Recall that if $\xi\in \ex$ then actually $\xi \circ \alpha U_\alpha \nearrow \xi$ as $\alpha \to \infty$.
We denote by $\pot$ the set of all \textit{ potential}
$\cu$-excessive measures: if $\xi\in \ex$ then $\xi\in \pot$ if
$\xi=\mu\circ U$, where $\mu$ is a $\sigma$-finite on $(E,\cb(E))$.

If $\beta>0$ then the \textit{energy functional} $L_{\beta}: \exb\times
\ce(\cu_\beta) \longrightarrow \overline{\R}_+$ is defined by
\[
L_{\beta} (\xi, u):= \sup \{ \nu(u) : \, \potb \ni \nu\circ
U_\beta\leqslant  \xi\}.
\]

\noindent
$(A1.1)$  There exists a second Lusin measurable space $(E_1,\cb_1)$ such that $E\subset E_1,\
  E\in\cb_1,\  \cb(E)=\cb_1|_E$,
and a resolvent of kernels $\cu^{1}=(U^{1}_{\alpha})_{\alpha>0}$
on $(E_1,\cb_1)$  satisfying $(h1)$ on this larger space,
 $U^{1}_{\beta}(1_{E_1 \setminus E})=0$, 
$\cu$ is the restriction of $\cu^{1}$ to $E$ 
(i.e.,  $U_{\beta}(g)=U^1_{\beta}(g^1)$, where $g^1\in \mbox{p}\cb_1$ and
$g^1|_{E}=g)$.
We clearly  have $ \mbox{\textsf {Exc}}({\cu_\beta})= \mbox{\textsf {Exc}}({\cu^1_\beta})$ and
the following property  holds (for one and therefore for all  $\beta>0$):\\

\noindent 
$(A1.1.a)\quad $ every $\xi\in \mbox{\textsf {Exc}}({\cu^1_\beta})$  with $L_{\beta}(\xi,1)<\infty$  
is a potential on $E_1$ (with respect to $\cu^1_\beta$).\\

\noindent
One can take  for $E_1$ the set of all extreme points of the set
$\{\xi\in \mbox{\textsf {Exc}}({\cu_\beta}) |\,  L_\beta(\xi,1)=1\}$,
endowed with the $\sigma$-algebra $\cb_1$ generated by the
functionals $\widetilde{u}$, $\widetilde{u}(\xi):=L_\beta(\xi,u)$
for all $\xi \in E_1$ and $u\in \ce(\cu_\beta)$. 
Let $(E' ,\cb' )$ be a Lusin measurable space such that $E \subset E' $, $E \in \cb' $, $\cb(E)=\cb' |_{E}$,
and there exists a proper sub-Markovian resolvent of kernels $\cu' =(U' _{\alpha})_{\alpha > 0}$ on $(E' ,\cb' )$
with $D_{\cu' _{\beta}}=E' $, $\sigma(\ce(\cu' _{\beta}))=\cb' $, $U' _{\beta}(1_{E'  \setminus E})=0$, $E' $
satisfies $(A1.1.a)$ with respect to $\cu' $,  and $\cu$ is the restriction of $\cu' $ to $E$.
Then the map $x\longmapsto \varepsilon_x \circ U' _{\beta}$ is a measurable isomorphism between
$(E' ,\cb' )$ and the measurable space $(E_{1},\cb_{1})$.

\noindent
\textbf{Extension of excessive functions from $E$ to $E_1$. }
If  $\xi=\mu\circ U_\beta\in \potb$ and $u\in \ce(\cu_\beta)$ then  by Theorem 1.4.5 from \cite{BeBo04} we have
\[
\leqno{(A1.1.b)}\quad\quad    L_\beta(\xi, u)= \int u \, d\mu.
\]

\noindent
$(A1.1.c)\quad$ 
For every  $u\in \ce(\cu_\beta)$ we consider the function 
$\widetilde{u}: E_1\longrightarrow \overline{\R}_+$ defined above,
\[
\widetilde{u}(\xi):=L_{\beta}(\xi, u), \quad \xi\in E_1.
\]
Then by $(A1.1.b)$ we have $\widetilde{u}(\varepsilon_x\circ U_\beta)= u(x)$ 
for all $x\in E$
and therefore, by the embedding of $E$ in $E_1$,
\[
\widetilde{u}|_E=u.
\]
In addition, $\widetilde{u}$ is $\cu_\beta^1$-excessive and it is the (unique) extension by fine continuity of 
$u$ from $E$ to $E_1$.\\

\noindent
$(A1.2)$ \textbf{Ray cones.} If $\beta>0$ then a \textit{Ray cone} associated with ${\mathcal
U}_\beta$ is a cone $\mathcal R$ of bounded $\mathcal
U_\beta$-excessive functions such that: $U_\alpha(\mathcal
R)\subset \mathcal R$ for all $\alpha>0$, $U_\beta\bigl((\mathcal
R-\mathcal R)_+\bigr)\subset \mathcal R$, $\sigma(\mathcal
R)=\cb(E)$, it is min-stable, separable in the supremum norm
and $1\in\mathcal R$. Such a Ray cone always exists. 
Below if we say Ray cone it is always meant
to be associated with one fixed resolvent $\mathcal U_\beta$.
If $\cu$ is transient (i.e., there exists a strictly positive function $f_o\in bp\cb(E)$ with $\sup_\alpha U_\alpha f_o<\infty$, 
then one can take $\beta=0$, that is, there is  a Ray cone of $\cu$-excessive functions).
A  \textit{Ray topology} on $E$ is a topology generated by a Ray cone; for
more details see ch. 1 in \cite{BeBo04} and also \cite{BeBoRo06a} for the
non-transient case.\\

\noindent
\textbf{(A2) Proof of Lemma \ref{lem-quasileft}.} 
As we already mentioned, we follow the classical approach, cf., e.g.,
page 48 from  \cite{Sh88}, page 115 in \cite{MaRo92}, see also pages 133-134 in \cite{BeBo04}
and the proof of Theorem 5.5 $(ii)$ from \cite{BeRo11b}.

We start with the construction of a convenient compactification of $E$, as in the proof of Theorem 5.2 from \cite{BeRo11b}.

Let $K$ be the compactification of $E$ with respect to  $\cf$.  
Since for every real-valued function    $u\in \ce(\cu_\beta^o)$ 
the real-valued process $(e^{-\beta t} u\circ {X}_t )_{t\geqslant  0}$ 
is a  right continuous (${P}^x$-integrable) 
supermartingale under ${P}^x$ for all $x\in E$,
it follows that this process has left limits ${P}^x$-a.s.
and we conclude that  ${X}$ has left limits in $K$ a.s.

Let  $(T_n)_n$ be an increasing sequence of stopping times
and $T=\lim_{n} T_n.$ 
It is no loss of generality to assume that $T$ is bounded.
From the above considerations the limit $Z:= \lim_n X_{T_n}$ exists in $K$ a.s. and $Z(\omega)\in E$ if $T(\omega)< \zeta(\omega)$.

In order to prove that $Z=X_T$ a.s.  on  $[T<\zeta]$, it is enough to show that for every $x\in E$
and $G\in bp\cb(K\times K)$,
\[
E^x ( G 1_{E\times E}  (Z, X_T) ) = E^x ( G 1_{E\times E}  (Z, Z) ) . \eqno{(A2.1)}
\]
Indeed, taking as $G$ the indicator function of the diagonal of $K\times K$, from $(A2.1)$ we get
$P^x([Z\in E, Z\not = X_T])=0$.

Note that every function $f$ from  $\overline{[\cf]}$ 
has an extension by continuity from $E$ to $K$, denoted by $\overline{f}$.
Since $[b\ce(\cu_\beta)]$ is an algebra, we may assume that $\cf$ is multiplicative.
In order to prove $(A2.1)$ we  first use the strong Markov property 
(clearly, $\overline{f}(Z)\in \cf_{T}$) and then the  $P^x$-a.s. equality
$\lim_n f(X_{T_n}) p_t g(X_{T_n})= \overline{f}(Z) \overline{p_t g}(Z)$ 
(because we take $f\in  [\cf]$ and $p_tg$  belongs to $\overline{[\cf]}$ provided that $g\in \mathcal{K}$):
\[
E^x ( \overline{f}(Z)  U_\alpha g(X_T) )= 
E^x ( \overline{f}(Z)E^{X_T}\!\!\! \int_0^\infty \!\!\! e^{-\alpha t} g (X_t)\d t )=
E^x ( \overline{f}(Z) e^{\alpha T} \int_T^\infty  \!\!\! e^{-\alpha t} g (X_t)\d t )=
\]
\[
\lim_n E^x ( f(X_{T_n}) e^{\alpha T_n} \int_{T_n}^\infty \!\!\! e^{-\alpha t} g (X_t)\d t )=
\lim_n E^x (  f(X_{T_n})  U_\alpha g(X_{T_n}) ) =
\]
\[
\lim_n E^x ( f(X_{T_n})  \int_0^\infty e^{-\alpha t} p_t g(X_{T_n})\d t )=
E^x ( \overline{ f}(Z)  \int_0^\infty e^{-\alpha t} \overline{p_t g}(Z)\d t ).
\]
By a monotone class argument we have for all $h\in bp\cb(K)$
\[
E^x ( h 1_E (Z)  U_\alpha g(X_T) )= 
E^x  ( h 1_E (Z)  \int_0^\infty e^{-\alpha t} \overline{p_t g}(Z)\d t )
\]
and therefore
\[
E^x ( h 1_E (Z)  U_\alpha g(X_T) )= 
E^x  (  h (Z)  \int_0^\infty e^{-\alpha t} p_t g (Z)\d t ; Z\in E  ) =E^x (  h (Z)  U_\alpha g(Z); Z\in E ).
\]
Because $\lim_{\alpha\to \infty} \alpha U_\alpha g= g$ 
(since $g$ is continuous), multiplying by $\alpha$ and letting $\alpha$  tend to infinity we get
\[
E^x ( h 1_E (Z)  g(X_T) )= E^x (  (h  g1_E )(Z) ).
\]
Using again monotone class arguments we obtain first
\[
E^x ( h 1_E (Z) \cdot  k1_E (X_T) ) = E^x ( h1_E  (Z) \cdot  k1_E (Z) ) \quad \mbox{ for all } h,k \in bp\cb(K),
\]
and then  $(A2.1)$.

If the transition function $(p_t)_{t\geqslant  0}$ is Markovian then 
the limit $Z= \lim_n X_{T_n}$ exists in $E$ a.s. 
Therefore, in this case it is enough to show that 
$(A2.1)$ holds for every 
$G\in bp\cb(E\times E)$.
Note that the extensions  by continuity of $f$ and $p_t g$  
from $E$ to $K$ are not longer necessary, in particular,
$\lim_n f(X_{T_n}) p_t g(X_{T_n})= {f}(Z) {p_t g}(Z)$  $P^x$-a.s. 
$\hfill\square$\\[2mm]

\noindent
\textbf{(A3) Proof of Proposition \ref{prop4.1}.}
Let
\[
K\varphi:=B\widehat{\varphi}.
\]
With this notation $(4.4)$  becomes
\[
h_t(x)=T_t^{c}\varphi(x)+\int_{0}^{t}T^c_{t-u}cKh_u (x)\;du, \quad
t\geqslant  0, \;  x\in E. \eqno{(A3.1)}
\]

We prove first the uniqueness.  
As in \cite{Si68}, the inequality $(4.11)$,  one can  see
that if $\vf, \psi\in \cb_{\mbox{\textsf {u}}}$ and $\mu\in \widehat{E}$ then
\[
|\widehat \vf(\mu)- \widehat\psi(\mu)| \leqslant  l_1(\mu)|| \vf -
\psi||_\infty . \eqno{(A3.2)}
\]
From $(4.1)$ and the $(A3.2)$ we conclude that 
\[
 \leqno{(A3.3)}\quad  \mbox{ the mapping } \vf \longmapsto cK\vf  \mbox{ is Lipschitz  with the constant } \beta_o.
\] 
If  $h_t$ and $h'_t$ are  two solutions of $(4.4)$ then for all $t\geqslant  0$
\[
\| h_t-h_t'\|_\infty \leqslant  \!\! \displaystyle\int_{0}^{t}  \|   T^c_{t-u}(\mid cK h_u- cK
h'_u\mid) \|_\infty du\leqslant  {\beta_o} \!\!  \displaystyle\int_{0}^{t}
\parallel h_u-h'_u
\parallel_\infty du.
\]
It follows by Gronwall's Lemma that $\parallel
h_t-h_t'\parallel_\infty=0$.\\

To prove the existence, define inductively the operators $H^n_t$, $n\geqslant  0$, as
$H_t^0 \varphi :=T_t^{c}\varphi,$
\[
H_t^{n+1}\varphi :=T_t^{c}\varphi+\displaystyle\int_{0}^{t}T^c_{t-u} cK H^{n}_{u}\varphi\; du,\;\varphi\in
\mathcal{B}_{\mbox{\textsf {u}}}. \eqno{(A3.4)}
\]
Clearly  the function $(t,x)\longmapsto  H^n_t \varphi(x)$ is measurable. 
We claim that the sequence $(H_t^n \varphi)_n$ is increasing. 
Indeed,  $H_t^1 \varphi=T_t^{c}\varphi +\displaystyle\int_{0}^{t}T^c_{t-u} cKH^0_u\varphi\; du\geqslant 
H_t^0\varphi$.
If we suppose that 
$H_t^{n-1}\varphi \leqslant  H_t^{n}\varphi$ then 
$H_t^{n+1} \varphi = T_t^{c} \varphi
+ \displaystyle\int_{0}^{t}T^c_{t-u}cKH_u^n\varphi \;du \geqslant 
T_t^{c}\varphi +\displaystyle\int_{0}^{t}T^c_{t-u}cKH_u^{n-1}\varphi\; du = H^n_t\varphi$. 
The last inequality holds because if  $\varphi\leqslant \psi$ then $cK\varphi\leqslant  cK\psi$ and in addition 
one can prove 
inductively that $H_t^n \varphi \leqslant  H_t^n \psi$ for all $n$.

We claim now that 
\[
H_t^n 1\leqslant  1 \quad \mbox{ for all } n\geqslant  0. \eqno{(A3.5)}
\]
We proceed again by induction. 
The inequality holds for $n=1$ because $(T_t^{c})_{t\geqslant  0}$
is sub-Markovian and $H_t^0 1=T_t^{c} 1$.
If we assume that $H_t^n 1\leqslant  1$ then $\widehat{H_t^n 1}\leqslant  1$  and therefore
\[
H_t^{n+1} 1 = T^c_t 1 + \int_0^t T^c_u(cB \widehat{H_t^n 1} ) du \leqslant   T_t^{c} 1 + \int_0^t T^c_u c\, du=
\]
\[
 E^x ( e^{-\int_0^t c(X_u)\, du}+ \int_0^t e^{-\int_0^s c(X_u)\, du} c(X_s)\, ds ) =1.
\]

If $\varphi \in \cb_{\mbox{\textsf {u}}}$ then by $(A3.5)$
$H_t^n \varphi \in \cb_{\mbox{\textsf {u}}}$ for all $n\geqslant  0$.
For $x\in E$, $t\geqslant  0$,  and $\varphi\in \mathcal{B}_{\mbox{\textsf {u}}}$ we set
\[
H_t\varphi(x):=\displaystyle\sup_{n}H^n_t\varphi(x).
\]
The function $(t,x)\longmapsto H_t \varphi (x)
$ is measurable, by $(A3.5)$ we have $H_t 1\leqslant  1$, $H_t(\cb_{\mbox{\textsf{u}}})\subset \cb_{\mbox{\textsf{u}}}$, 
and passing to the pointwise limit in $(A3.4)$ it follows that 
$(H_t
\varphi)_{t\geqslant  0}$ verifies $(A3.1)$.

$(i)$ We show inductively  that for all $n$  the operator $H^n_t$ is absolutely monotonic.
If $n=1$ then 
$H^1_t \varphi =T_t^{c}\varphi=\mbox{\textbf {T}}_t\widehat{\varphi}$, where
$\mbox{\textbf {T}}_t: bp\mathcal{B}(\widehat{E})\longrightarrow bp\mathcal{B}(E)$ 
is the kernel defined by
$\mbox{\textbf {T}}_tg=T_t^{c}(g|_E)$ for all  $g\in
bp\mathcal{B}(\widehat{E}).$ 
Hence
$H_t^1 \varphi=\mbox{\textbf {T}}_t \widehat{\varphi}$ for all $\varphi\in\mathcal{B}_{\mbox{\textsf{u}}}$ and therefore
$H^1_t$ is absolutely monotonic.
Suppose now that $H^n_t$ is absolutely monotonic, $H_t^n\vf= \mbox{\textbf {H}}_t^n \widehat{\vf}.$ 
We have
\[
H_t^{n+1}\varphi=\mbox{\textbf {T}}_t \widehat{\varphi}+ \int_{0}^{t}T_{t-u}c B\widehat{H_u^n \varphi}\;du=
(\mbox{\textbf {T}}_t + \displaystyle\int_{0}^{t}T_{t-u}c
B\widehat{\mbox{\textbf {H}}_u^n}\;du)\widehat{\varphi},
\]
where $\widehat{\mbox{\textbf {H}}_u^n}$ is the branching kernel on $\widehat{E}$ associated by $(4.2)$  with $\mbox{\textbf {H}}_u^n$. 
Taking 
\[
\mbox{\textbf {H}}_t^{n+1}:=
\mbox{\textbf {T}}_t + \int_{0}^{t}T_{t-u}c
B\widehat{\mbox{\textbf {H}}_u^n}\;du, \eqno{(A3.6)}
\]
it follows that  $H_t^{n+1}$ is also absolutely monotonic.
One can deduce from $(A3.6)$ that for all $t\geqslant  0$ the sequence of kernels
$(\mbox{\textbf {H}}_t^n)_{n\geqslant  0}$ is increasing and therefore we may  consider the kernel 
$\mbox{\textbf {H}}_t$ defined as $\mbox{\textbf {H}}_t:= \sup_n \mbox{\textbf {H}}_t^n$.
From the above considerations for all $\vf\in\mathcal{B}_{\mbox{\textsf{u}}}$ we have
$H_t\varphi = \sup_n H_t^n \varphi=\sup_n \mbox{\textbf {H}}_t^n \widehat{\vf}=
\mbox{\textbf {H}}_t\widehat{\varphi}$ and we conclude that  $H_t$ is absolutely  monotonic.

We prove now the Lipschitz property of the mapping $\vf\longmapsto H_t \vf$. 
For, if  $\vf, \psi\in \mathcal{B}_{\mbox{\textsf{u}}}$ and  $t\geqslant  0$ then 
by $(A3.1)$ and $(A3.3)$ 
\[
|| H_t \vf - H_t \psi ||_\infty\leqslant 
||\vf - \psi ||_\infty +  {\beta_o} \int_0^t  || H_u \vf - H_u \psi ||_\infty du
\]
and by Gronwall's Lemma we conclude that $|| H_t \vf - H_t \psi ||_\infty\leqslant  {{\beta_o}} t ||\vf - \psi ||_\infty$.

$(ii)$ The semigroup property of $(H_t)_{t\geqslant  0}$  is a consequence of the uniqueness. 
Indeed, we have to show that $H_{t'+t}\varphi =H_t(H_{t'}\varphi)$,
so, it is enough to prove that the mapping
$t\longmapsto H_{t'+t}\varphi$ verifies $(A3.1)$ with $H_{t'}\varphi$ instead of $\varphi$.
We have 
\[
H_{t'+t}\varphi= 
T_t^{c} T_{t'}\varphi + \int_{0}^{t'} \!\! T_t^{c}(T^c_{t'-u} cK {H_u \varphi})\, du +
\int_{t'}^{t'+t}\!\!\!  T^c_{t'+t-u} cK {H_u \varphi}\, du= 
\]
\[
T_t^{c}(T^c_{t'}\varphi+\int_{0}^{t'}\!\! T^c_{t'-u} cK {H_u
\varphi}\;du)+ \int_{0}^{t}\!\! T^c_{t-s} cK {H_{t'+s} \varphi}\, ds=
T_t^c H_{t'}\varphi +\int_{0}^{t}\!\! T^c_{t-s} cK {H_{t'+s}\varphi}\, ds.
\]

Suppose now that $B1=1$ and 
define inductively the operators $H'^n_t$, $n\geqslant  0$, as
$H'^0_t \varphi :=T_t^{c}\varphi+ \int_0^t T^c_u cK\vf du $,
\[
H'^{n+1}_t \varphi :=T^c_t\varphi+\displaystyle\int_{0}^{t}T^c_{u} cK H'^{n}_{t-u}\varphi\; du,\;\varphi\in
\mathcal{B}_{\mbox{\textsf{u}}}. \eqno{(A3.7)}
\]
We already observed that 
$ T^c_t 1 + \int_0^t T^c_u c\, du=1$, therefore $H'^0_t 1=1$ and by induction we get
that $H'^n_t 1=1$ for all $n\in \N$. Using $(A3.3)$ as before we obtain
\[
||H'^{n+1}_t\vf - H'^n_t \vf ||_\infty \leqslant  {\beta_o}\int_0^t ||H'^{n}_u \vf - H'^{n-1}_u \vf ||_\infty du
\]
and because 
$||H'^{1}_t\vf - H'^0_t \vf ||_\infty \leqslant  {\beta_o} ||\vf ||_\infty \int_0^t (2+{\beta_o} u)du =
$$  ||\vf ||_\infty (2{\beta_o} t + \frac{({\beta_o} t)^2}{2})$  again by induction
\[
||H'^{n+1}_t \vf - H'^n_t  \vf ||_\infty \leqslant   ||\vf ||_\infty \left
( 2 \frac{\,\, ({\beta_o} t)^{n+1}}{(n+1)!} +  \frac{\,\, ({\beta_o} t)^{n+2}}{(n+2)!}\right).
\]
Consequently, if $t_o>0$ is fixed then
\[
\sup_{\stackrel{x\in E}{t\leqslant  t_o}}\left| H'^{n+1}_t \vf(x) - H'^n_t  \vf(x) \right|\leqslant  
\left( 2 \frac{\,\, ({{\beta_o}} t_o)^{n+1}}{(n+1)!} +  \frac{\,\, ({\beta_o} t_o)^{n+2}}{(n+2)!}\right).
\]
It follows that the sequence $(H'^n_t\vf)_n$ is Cauchy in the supremum norm and passing to the limit in $(A3.7)$,
we deduce that the  pointwise limit of this sequence verifies $(A3.1)$, hence it is $H_t\vf$ by the uniqueness of the solution. 
In particular, $H_t 1=\lim_n H'^n_t 1=1$.

$(iii)$  Because the family $(H_t)_{t\geqslant  0}$ is a semigroup, it is enough to prove the right continuity in $t=0$. 
Since $H_t \varphi(x)$ is a solution of $(A3.1)$ and the function
$u\longmapsto  T^c_{t-u} cKh_u(x)$ is bounded on $[0,\infty)$, by dominate convergence we get
$\lim_{t\searrow 0} \int_{0}^{t}T^c_{t-u} cKh_u (x)\,du=0$, hence $t\longmapsto H_t\varphi(x)$ is
right continuous in $t=0.$
$\hfill\square$\\[2mm]

\noindent
\textbf{(A4) Proof of Proposition \ref{prop4.4}.} 
We may suppose that $f\leqslant  1$ and define the sub-Markovian kernel 
$\mbox{\textbf {K}}: bp\mathcal{B}(\widehat{E})\longrightarrow bp\mathcal{B}(E)$ by
$\mbox{\textbf {K}} g:= K(g|_E)$ for all  $g\in bp\mathcal{B}(\widehat{E}).$ 
We apply Proposition \ref{prop4.1}  for  $B:= \mbox{\textbf {K}}$
and observe  that  $H^n_t$ extends to kernel on $(E, \cb(E))$  for each  $t\geqslant  0$ and $n\in \N$.  
Since the limit of $(H^n_t)_n$   is increasing we conclude that the solution of the
equation $(4.7)$ also extends to  a kernel $Q_t$ on  $(E, \cb(E))$. 
This proves  $(ii)$ and the first part of assertion $(i)$. 

$(iii)$  The equality $U^o_{\beta} =U^c_\beta+ U^c_{\beta} cK U^o_{\beta}$ 
follows from $(4.7)$ by a straightforward calculation.
Then by induction $U^o_{\beta} =U^c_\beta+  (U^c_{\beta} cK)^2 U^c_\beta+\ldots +  (U^c_{\beta} cK)^n U^c_\beta+
 (U^c_{\beta} cK)^{n+1} U^o_\beta$ and letting $n$ tends to infinity we have $U^o_{\beta} =U^c_\beta+ G_\beta U^c_{\beta}$.
The kernel $G_\beta$ is bounded because
$U^c_{\beta} cK1\leqslant $$ ||\frac{c}{c+\beta}||_\infty
\lim_{t\to\infty}\int_0^t T_u^{c+\beta} (c+\beta) du\leqslant $$ 
\frac{c_o}{c_o +\beta},$
where $c_o:= ||c||_\infty$.
If $u\in b\ce(\cu^o_{\beta})$ then clearly  $\alpha U^c_{{\beta}+\alpha} u\leqslant  u$ for all $\alpha>0$
because  $U^c_\alpha\leqslant  U^o_\alpha$. 
From $\lim_{t\to 0} Q_t u=u$ we get by $(ii)$
that $\lim_{t\to 0} T_t^{c} u=u$, hence $u\in \ce(\cu^c_{\beta})$, 
$b\ce(\cu^o_{\beta})\subset \ce(\cu^c_{\beta})$.
The inequality  $U^c_\beta \leqslant  U^o_\beta$ for all $\beta>0$
implies that the function $G_\beta U^c_\beta f=U^o_\beta f - U^c_\beta f$ is $\cu^o_\beta$-excessive
for every $f\in bp\cb(E)$.
If $v\in b\ce(\cu^c_{\beta})$ then we take a sequence $(f_n)_n\subset bp\cb(E)$ such that
$U^c_\beta f_n \nearrow v$ and therefore $G_\beta U^c_\beta f_n \nearrow G_\beta v\in \ce(\cu^o_{\beta})$,
$U^o_\beta f_n \nearrow v+G_\beta v \in b\ce(\cu^o_{\beta})$, so $v\in [b\ce(\cu^o_{\beta})].$
We clearly have $\ce(\cu_{\beta})\subset \ce(\cu^c_{\beta})\subset \ce(\cu_{c_o+\beta})$
and by Remark \ref{rem2.1} we get $[b\ce(\cu_{\beta})]=  [b\ce(\cu_{c_o+\beta})]= [b\ce(\cu^c_{\beta})]$.

We check now $(h1)$ for $\cu^o$. 
From $[b \ce(\cu^o_{\beta})]= [b \ce(\cu^c_{\beta})]$  and since $\cu^c$ verifies $(h1)$
we conclude that $\cb(E) =\sigma(\ce(\cu^c_\beta))=\sigma(\ce(\cu^o_\beta)).$
The constant function $1$ is $\cu^o$-supermedian and it belongs to $[b \ce(\cu^o_{\beta})]$, therefore
$\lim_{t\to 0} Q_t 1=1$, $1\in \ce(\cu^o)$.

The fact that $(Q_t)_{t\geqslant  0}$ is the transition function of a right Markov process with state space $E$ 
is a consequence of Proposition 5.2.4, Proposition 3.5.3, and Corollary 1.8.12   from \cite{BeBo04}.
$\hfill\square$

\vspace{5mm}

\noindent
{\bf Notes added in proof.} 
Investigating  the  branching properties of the solution of a fragmentation equation for the mass distribution, 
in  \cite{BeDeLu15} it is used the main result of this paper (Theorem \ref{thm4.7})
for   constructing a branching process corresponding to a rate of loss of mass greater than a given
strictly positive threshold.

\bibliographystyle{amsplain}

\end{document}